\def\norm #1{\left\|#1\right\|}
\def\twon #1{\left\|#1\right\|_2}
\def\frobn #1{\left\|#1\right\|_{\text{F}}}
\def\abs #1{\left|#1\right|}
\def\inp #1{\left\langle#1\right\rangle}
\def\st{\text{subject to }}
\def\bC{\mathbb{C}}
\def\bR{\mathbb{R}}
\def\bE{\mathbb{E}}
\def\bP{\mathbb{P}}
\def\bT{\mathbb{T}}
\def\m #1{\boldsymbol{#1}}
\def\cA{\mathcal{A}}
\def\cC{\mathcal{C}}
\def\cH{\mathcal{H}}
\def\cL{\mathcal{L}}
\def\cN{\mathcal{N}}
\def\cP{\mathcal{P}}
\def\cT{\mathcal{T}}
\def\bee{\begin{equation}}
	\def\ene{\end{equation}}
\def\beq{\begin{eqnarray}}
	\def\enq{\end{eqnarray}}
\def\lentwo{\setlength\arraycolsep{2pt}}
\def\equ #1{\begin{equation}#1\end{equation}}
\def\equa #1{\begin{eqnarray}#1\end{eqnarray}}
\def\sbra #1{\left(#1\right)}
\def\mbra #1{\left[#1\right]}
\def\lbra #1{\left\{#1\right\}}
\def\tr #1{\text{tr}#1}
\def\rank #1{\text{rank}#1}
\def\st {\text{ subject to }}
\DeclareMathOperator*{\argmin}{arg\,min}
\crefname{hypothesis}{Hypothesis}{Hypotheses}
\title{Multichannel Frequency Estimation with Constant Amplitude via Convex Structured Low-Rank Approximation \thanks{Submitted to the editors DATE.
		\funding{The research of the project was supported by the National Natural Science Foundation of China under Grant 61977053. (\emph{Corresponding author: Zai Yang.})}}}
\author{Xunmeng Wu \footnotemark[2] \and Zai Yang \footnotemark[2] \and Zongben Xu \thanks{School of Mathematics and Statistics, Xi’an Jiaotong University, Xi’an 710049, China. (\email{wxm1996@stu.xjtu.edu.cn}, \email{yangzai@xjtu.edu.cn}, \email{zbxu@xjtu.edu.cn}).}
	}
\DeclareMathOperator{\diag}{diag}
\begin{document}

\maketitle

\begin{abstract}
	We study the problem of estimating the frequencies of several complex sinusoids with constant amplitude (CA) (also called constant modulus) from multichannel signals of their superposition. To exploit the CA property for frequency estimation in the framework of atomic norm minimization (ANM), we introduce multiple positive-semidefinite block matrices composed of Hankel and Toeplitz submatrices and formulate the ANM problem as a convex structured low-rank approximation (SLRA) problem. The proposed SLRA is a semidefinite programming and has substantial differences from existing such formulations without using the CA property. The proposed approach is termed as \textbf{S}LRA-based \textbf{A}NM for \textbf{CA} frequency estimation (\textbf{SACA}). We provide theoretical guarantees and extensive simulations that validate the advantages of SACA.
\end{abstract}

\begin{keywords}
Frequency estimation, constant amplitude/modulus, structured low-rank approximation,  Hankel-Toeplitz block matrices, convex optimization.
\end{keywords}

\begin{MSCcodes}
62M15, 15B05, 15B48, 41A29, 65F22, 93C41
\end{MSCcodes}

\section{Introduction}
Consider complex-valued multichannel signals that compose an $N\times L$ matrix whose $\sbra{j,l}$ entry is given by
\equ{
	x^\star_{j,l}  = \sum^K_{k=1} b_k e^{i(2\pi f_k (j-1)+\phi_{k,l})}, \label{eq:signal}
} 
where $f_k \in \bT \triangleq \left[-1/2, 1/2 \right)$, $b_k>0$, and $\phi_{k,l}\in \bR$ denote the $k$-th unknown normalized frequency, amplitude, and the associated phase in the $l$-th channel, respectively. Moreover, in \eqref{eq:signal} $i=\sqrt{-1}$, $K$ is the model order, $N$ is the signal length per channel and $L$ is the number of channels. It is seen that the signals among multiple channels share the same frequencies and amplitudes but have different phases. To distinguish it from general multichannel signals where the amplitudes are also different among multiple channels, we refer to \cref{eq:signal} as constant amplitude (CA) signals.

The noisy (possibly incomplete) measurements of $x^\star_{j,l}$ are obtained as
\equ{
	y_{j,l} = x^\star_{j,l} + e_{j,l}, \quad j \in \Omega, \; l = 1,\ldots,L, \label{eq:ob}
} 
where $\Omega \subseteq \lbra{1,\ldots, N}$ denotes the observation set of size $M \le N$ and $e_{j,l}$ is the noise. Note that $\Omega = \lbra{1,\ldots, N}$ and $\Omega \subset \lbra{1,\ldots, N}$ correspond to the full data case and the missing data case, respectively.

Denote $\m{a}\sbra{f} = \mbra{1,e^{i2\pi f},\ldots,e^{i2\pi \sbra{N-1} f} }^T$ and $\m{f}=\mbra{f_1,\ldots,f_K}^T$.
We rewrite \eqref{eq:ob} in matrix form as:
\equ{
	\m{Y}_{\Omega} = \m{A}_{\Omega}\sbra{\m{f}} \m{B} \m{\Phi}+\m{E}_{\Omega} = \m{X}^\star_{\Omega} + \m{E}_{\Omega}, \label{eq:signal_matrix_CA}
}
where the $K\times K$ amplitude matrix $\m{B}=\diag\sbra{\m{b}} $ with $\m{b}=\mbra{b_1,\ldots,b_K}^T$, the $K\times L$ phase matrix $\m{\Phi}$ with $\Phi_{k,l} = e^{i\phi_{k,l}}$, and $\m{A}_\Omega \sbra{\m{f}} $ is the $M \times K$ submatrix of the $N\times K$ Vandermonde matrix $\m{A} \sbra{\m{f}} =\mbra{\m{a}\sbra{f_1},\ldots,\m{a}\sbra{f_K}} $ that is composed of its rows indexed by $\Omega$. Similarly, $\m{Y}_{\Omega}$, $\m{X}^\star_{\Omega}$, and $\m{E}_{\Omega}$ are the submatrices of $\m{Y} = \mbra{y_{j,l}}$, $\m{X}^\star = [x^\star_{j,l}] = \m{A}\sbra{\m{f}} \m{B}\m{\Phi}$, and $\m{E} = \mbra{e_{j,l}}$, receptively. For notational simplicity, we write $\m{A}(\m{f})$ as $\m{A}$ hereafter.

In this paper, we are interested in estimating the frequencies $\m{f}$ from the observations $\m{Y}_\Omega$, i.e., the frequency estimation for CA signals, referred to as the CA Frequency Estimation (CAFE) problem.
CAFE is of great importance and has wide applications in array signal processing \cite{treichler1983new,gooch1986cm,shynk1996constant,wax1992unique,williams1992resolving,valaee1994alternative,leshem1999direction,shynk1996constant,leshem1999direction,van2001asymptotic,leshem2000maximum,stoica2000maximum,pesavento2023three}, radar \cite{li2007mimo,cui2013mimo}, and wireless communications \cite{liu2018toward,zhang2021overview}. For example, in array signal processing, one needs to estimate the directions of several electromagnetic sources from outputs of an antenna array. The set $\Omega$ corresponds to the antenna locations of a linear array. The full data case and missing data case arise when the antennas form a uniform linear array and a sparse linear array, respectively. Each channel in $\lbra{1,\ldots,L}$ corresponds to one temporal snapshot, and each frequency has a one-to-one mapping to the direction. The set $\lbra{b_k e^{i\phi_{k,l}}}$ in \eqref{eq:signal} corresponds to the signals emitted by the sources, which has CA (or constant modulus) among multiple snapshots. Many man-made signals, such as phase-modulated and frequency-modulated signals, exhibit such a CA property. Besides, the CA constraint is usually imposed in numerous tasks of radar and communications such as waveform design for power efficient transmission and enhanced sensing performance \cite{cui2013mimo,liu2018toward}. 

Frequency estimation is a fundamental problem in statistical signal processing and has a long history of research. It was first discussed by Prony for the noiseless case in 1795. Due to its connection to array processing, frequency estimation has been widely studied. Since World War II, the Bartlett beamformer based on Fourier analysis and the Capon beamformer have been proposed. Since the 1970s, numerous approaches have been developed, including Pisarenko’s method, subspace-based methods such as MUSIC \cite{schmidt1986multiple,barabell1983improving,stoica1995optimally} and ESPRIT \cite{roy1989esprit,park1994esprit,andersson2018esprit,yang2023nonasymptotic}, and
matrix pencil method \cite{hua1990matrix}. Readers are referred to
\cite{krim1996two,stoica2005spectral} for a review. 
Sparse optimization and compressed sensing methods have become attractive since this century, see \cite{yang2018sparse}. By exploiting the signal sparsity (i.e., $K<N$) and proposing an optimization framework for signal recovery, sparse methods do not need \emph{a priori} knowledge on the model order $K$ and are applicable to the missing data case. Rigorous theory and algorithms are established in \cite{candes2014towards,tang2013compressed,bhaskar2013atomic,yang2016exact,yang2018sample,li2015off} to deal with the continuous-valued frequencies in which atomic norm (or total variation norm) minimization (ANM) methods are proposed. Instead of directly estimating the frequency, ANM methods turn to optimize a low-rank Toeplitz matrix where the low-rankness comes from the signal sparsity. 
Structured low-rank approximation (SLRA) \cite{andersson2014new,andersson2019fixed,cai2018spectral,cai2019fast,chen2014robust,yang2023new,markovsky2008structured} is another powerful approach for frequency estimation. By the Kronecker's theorem \cite{adamyan1968infinite}, the frequency estimation is formulated into a Hankel matrix low-rank approximation problem where the rank corresponds to the model order. The Hankel matrix is also exploited in \cite{beylkin2005approximation} to develop an approximation algorithm for frequency estimation and widely used in system identification \cite{fazel2013hankel,markovsky2013structured}.
Note that all these methods focus on exploiting the Vandermonde structure of $\m{A}(\m{f})$ in signals and cannot use the CA property in CAFE concerned in the present paper.

Taking the CA property into account in frequency estimation traces back to \cite{gooch1986cm,shynk1996constant} and has been studied in the past four decades in array processing. In the language of frequency estimation, it is shown in \cite{wax1992unique,williams1992resolving,valaee1994alternative,leshem1999direction} that the exploitation of the CA property brings substantial benefits to frequency estimation including the ability of identifying $K\ge N$ frequencies and a lower Cram\'{e}r-Rao bound (CRB). But the use of the CA property results in a large number of nonconvex constraints and brings great challenges to algorithm design. To deal with it, the iterative constant modulus algorithm \cite{godard1980self,treichler1983new}, the analytic constant modulus algorithm (ACMA) \cite{van1996analytical}, and the zero-forcing variant of ACMA (ZF-ACMA) \cite{van2001asymptotic} have been proposed. These algorithms have been utilized in CAFE to estimate the Vandermonde matrix by omitting its structure, from which the frequencies are estimated using grid search or ESPRIT \cite{shynk1996constant,leshem1999direction,van2001asymptotic}.
Since the Vandermonde structure cannot be used in the first step, such two-step methods are suboptimal and suffer from the limit of $K\le N$ \cite{van1996analytical,van2001asymptotic}. A Newton scoring algorithm is proposed in \cite{leshem2000maximum} to solve the highly nonconvex maximum-likelihood estimation (MLE) problem of CAFE whose performance heavily depends on the initialization (note that the nonconvexity of the MLE comes from the Vandermonde structure besides the CA constraints). The paper \cite{stoica2000maximum} considered the case of $K=1$ and derived a simple expression for the MLE. 

Due to the great challenges brought by the CA constraints, to the best of our knowledge, few progresses have been made for CAFE in the past two decades until recently. In \cite{wu2022Direction} the authors proposed a structured matrix recovery technique (SMART) for CAFE in which the highly nonconvex MLE problem is formulated as a rank-constrained structured matrix recovery problem that is then properly solved thanks to the recent progresses on low-rank matrix recovery \cite{shen2014augmented,davenport2016overview}.  It is worth noting that all aforementioned algorithms for CAFE need the knowledge of the model order and are based on nonconvex optimization, which does not guarantee global optimality and limits their practical interest. This motivates us to develop efficient convex optimization methods to tackle CAFE with theoretical performance guarantees. 

In this paper, we propose a \textbf{S}LRA-based \textbf{A}NM approach for \textbf{CA}FE, named as \textbf{SACA}. Our main contributions are summarized below.
\begin{enumerate}
	\item To use the CA property, we propose a CA atomic norm, in which a unit-modulus constraint is imposed, and formulate CAFE in the noiseless and noisy cases as CA atomic norm minimization problems (see \Cref{sec:def}). 
	\item To make the previous CA atomic norm minimization problems be computationally tractable, we need to deal with the nonconvex CA constraints besides the infinitely many frequency variables. To this end, we show that a proper relaxation of the CA constraints in the CA atomic norm results in an equivalent atomic norm. We further cast the CA atomic norm as a convex SLRA problem by introducing multiple positive-semidefinite (PSD) Hankel-Toeplitz block matrices to capture the CA property. We show that the proposed SLRA is a semidefinite programming (SDP) and is essential to SACA by analyzing its differences from existing such formulations without using the CA property \cite{yang2016exact,yang2018sample,li2015off,fernandez2016super,steffens2018compact,li2018atomic} (see \Cref{sec:SDP}--\Cref{sec:dual}). 
	\item We develop a reasonably fast algorithm for the SLRA based on the alternating direction method of multipliers (ADMM) \cite{boyd2010distributed} and analyze its computational complexity (see \Cref{sec:dual} and \Cref{sec:ADMM}).
	\item We show theoretically the benefit of using the CA property for frequency estimation and provide theoretical guarantees for exact recovery of SACA in the noiseless case. We also derive the optimal regularization parameter in the noisy case. Extensive simulations are carried out to validate our theoretical findings and confirm the superiority of the proposed SACA (see \Cref{sec:theory} and \Cref{sec:Sim}).
\end{enumerate}

\subsection{Relations to Prior Art}
As compared to ANM for general multichannel frequency estimation  \cite{yang2016exact,yang2018sample,li2015off,fernandez2016super,steffens2018compact,li2018atomic}, the proposed SACA is different in the definition of atomic norm, computable characterization, and theoretical performance analyses. Specifically, a unit-modulus constraint is included in the definition of the set of atoms for SACA in order to exploit the CA property, which is not shared by ANM and brings challenges to the subsequent computable characterization. To make the proposed atomic norm be computable, the key is to capture the amplitudes of signals (besides the frequencies) in each channel and impose their equality by introducing only convex constraints. This challenging task is accomplished, inspired by our recent papers \cite{wu2022maximum,wu2022Direction}, by construction of a PSD block matrix composed of Hankel and Toeplitz submatrices for each channel and letting these block matrices share a same Toeplitz submatrix that is shown to capture the amplitudes and frequencies. Based on such structured matrices, a convex SLRA problem is formulated. We also show that the Hankel-Toeplitz block matrices are essential to SACA by presenting an intermediate formulation that links both the existing formulations of ANM and the proposed SLRA of SACA.

Moreover, the CA constraints make previous theoretical analyses of ANM inapplicable to SACA. For example, in existing theory for ANM with missing data, the phase vector for each sinusoidal signal is assumed to be uniformly distributed on the unit hypersphere, which cannot be satisfied for SACA where each phase vector has unit-modulus entries. In the noisy case, the key is the computation of a regularization parameter, which is expressed as the expectation of the dual atomic norm of the random noise matrix, given the analysis framework in \cite{bhaskar2013atomic}. While the dual atomic norm for ANM is expressed using the $\ell_2$-norm, it is cast by the $\ell_1$-norm for SACA that is more difficult to deal with. The aforementioned differences inevitably bring new challenges to our theoretical performance analyses of SACA.

The SLRA problem of SACA is inspired by our recent papers \cite{wu2022maximum,wu2022Direction}, as mentioned previously. In contrast to SMART \cite{wu2022Direction} in which nonconvex rank constraints are imposed to explicitly restrict the model order, the signal sparsity in SACA is promoted by minimization of the matrix trace norm. From this point of view, SACA is a convex relaxation of SMART. As compared to SMART, SACA can always find the global optimal solution and has theoretical performance guarantees. Another advantage of SACA is that it does not require the model order.

Compared to the structured matrix embedding and recovery (StruMER) method in \cite{wu2023multichannel}, SACA is different in the estimation problem, optimization framework, and optimization model. Firstly, StruMER was developed for frequency estimation for general (not necessarily CA) multichannel signals, while SACA is proposed for CA multichannel signals. Secondly, StruMER is a nonconvex optimization approach based on maximum likelihood estimation, while SACA is a convex optimization method based on atomic norm minimization. Thirdly, the optimization model of StruMER is a rank-constrained problem, while that of SACA is semidefinite programming (SDP). Although both models involve multiple Hankel-Toeplitz block matrices, which is a similarity between StruMER and SACA, they impose different linear constraints on the Toeplitz submatrices to capture distinct signal structures.

Partial results of this work, as a 5-page short paper, have been submitted to the upcoming conference \cite{wu2024direction} during the review of this paper, in which only the optimization formulation of SACA with simplified derivations and partial simulation results in the noiseless case are included. 

\subsection{Notation}
Boldface letters are reserved for vectors and matrices. The sets of real and complex numbers are denoted $\bR$ and $\bC$, respectively. For vector $\m{x}$, $\m{x}^T$, $\overline{\m{x}}$, $\m{x}^H$, $\norm{\m{x}}_1$,  $\norm{\m{x}}_2$, and $\norm{\m{x}}_{\infty}$ denote its transpose, complex conjugate, conjugate transpose, $\ell_1$-norm, $\ell_2$-norm, and $\ell_\infty$-norm, respectively. For matrix $\m{X}$, its transpose, complex conjugate, conjugate transpose,  pseudo-inverse, Frobenius norm, rank, trace, and column space are denoted $\m{X}^T$, $\overline{\m{X}}$, $\m{X}^H$, $\m{X}^\dagger$, $\frobn{\m{X}}$, $\rank\sbra{\m{X}}$, $\tr\sbra{\m{X}}$, and $\text{range}\sbra{\m{X}}$, respectively. The inner product is represented by  $\langle \cdot,\cdot \rangle_{\bR} $. $\m{X} \succ \m{0}$ and $\m{X} \succeq \m{0}$ mean that $\m{X}$ is Hermitian positive definite and  PSD, respectively. The notation $\abs{\cdot}$ denotes the amplitude of a scalar. $\lceil x \rceil$ (or $\lfloor x \rfloor$) denotes the smallest (or largest) integer greater (or less) than or equal to $x$. The diagonal matrix with vector $\m{x}$ on the diagonal is denoted $\diag\sbra{\m{x}}$. An identity matrix is denoted as $\m{I}$. The $j$-th entry of vector $\m{x}$ is $x_j$, the $\sbra{i,j}$-th entry of $\m{X}$ is $x_{i,j}$, and the $i$-th row (or $j$-th column) of $\m{X}$ is $\m{X}_{i,:}$ (or $\m{X}_{:,j}$). Denote $\m{X}_\Omega$ as the submatrix of $\m{X}$ formed by its rows indexed by the subset $\Omega$.
Denote the complementary set of $\Omega$ as $\Omega^c = \lbra{1,\ldots,N} - \Omega$.
Given vectors $\m{x} \in\bC^{2n-1}$ and $\m{t} \in\bC^{2n-1}$ satisfying $t_j = \overline{t}_{2n-j}$ for $j=1,\ldots,n$,
\equ{
	\cH\m{x} = \begin{bmatrix} x_1 & x_2 & \dots & x_{n} \\ x_2 & x_3 & \cdots & x_{n+1} \\ \vdots & \vdots & \ddots & \vdots \\ x_n & x_{n+1} & \dots & x_{2n-1} \end{bmatrix} \quad \text{and} \quad  \cT\m{t} = \begin{bmatrix} t_n & t_{n+1} & \dots & t_{2n-1} \\ t_{n-1} & t_n & \cdots & t_{2n-2} \\ \vdots & \vdots & \ddots & \vdots \\ t_1 & t_2 & \dots & t_n \end{bmatrix} 
}
denote an $n \times n$ Hankel matrix $\cH\m{x}$ and an $n \times n$ Hermitian-Toeplitz matrix, respectively. Let $\cH^H$ and $\cT^H$ denote the adjoint of the Hankel operator $\cH$ satisfying $\langle \cH\m{x}, \m{X} \rangle = \langle \m{x}, \cH^H\m{X} \rangle$ for any $n \times n$ matrix $\m{X}$ and the adjoint of the Toeplitz operator $\cT$ satisfying $\langle \cT\m{t}, \m{X} \rangle = \langle \m{t}, \cT^H\m{X} \rangle$ where $\m{X}$ is Hermitian, respectively.

\section{SLRA-based ANM for CAFE (SACA)} \label{sec:ANM-CA}
\subsection{Definition of ANM for CA Signals} \label{sec:def}
By utilizing specific structures of a signal, atomic norm \cite{chandrasekaran2012convex} offers a generic method for finding a sparse representation of the signal. With appropriately chosen atoms, the atomic norm generalizes both the $\ell_1$-norm for sparse signal recovery and the nuclear norm for low-rank matrix recovery.

To find a sparse representation of signal $\m{X}^\star$ for CAFE, we define the set of atoms
\equ{
	\cA \triangleq \lbra{ \m{a}(f) \m{\psi}: f \in \bT, \m{\psi} \in \bC^{1 \times  L}, \abs{\psi_l}=1, l=1,\ldots,L }, \label{eq:A0}
}
as building blocks of $\m{X}^\star$ in which the CA structure is enforced explicitly.
To exploit the signal sparsity, the CA atomic norm of $\m{X} \in \bC^{N\times L}$ is defined as the gauge function of the convex hull of $\cA$ \cite{chandrasekaran2012convex}:
\equ{
	\begin{split}
		\norm{\m{X}}_{\cA} & \triangleq \inf \lbra{ \lambda>0: \m{X}\in \lambda \; \text{conv}\sbra{\cA} } \\
		& = \inf \lbra{ \sum_k b_k : \m{X} = \sum_{k} b_k \m{a}_k, \m{a}_k \in \cA, b_k \ge 0  }.
	\end{split} \label{eq:atomic1}
}

Following the literature of frequency estimation based on ANM \cite{bhaskar2013atomic,tang2013compressed,yang2016exact,yang2018sample,li2015off,li2018atomic}, in the absence of noise, we consider the following CA atomic norm minimization problem:
\equ{
	\min_{\m{X}} \norm{\m{X}}_{\cA}, \st \m{X}_{\Omega} = \m{X}^\star_{\Omega}, \label{eq:p1}
}
to recover the noiseless full signal matrix $\m{X}^\star$ and its frequencies $\m{f}$. When the observations are corrupted by i.i.d. zero-mean Gaussian noise $\lbra{e_{j,l}}$, we consider the following CA atomic norm denoising problem:
\equ{
	\min_{\m{X}} \frac{1}{2} \frobn{\m{Y}_{\Omega}-\m{X}_{\Omega}}^2 + \tau \norm{\m{X}}_{\cA}, \label{eq:atomic_min2}
}
where $\tau$ is a regularization parameter that will be specified in \Cref{sec:tau}.

\subsection{Computable Characterization via SLRA} \label{sec:SDP}
Though the problems in \eqref{eq:p1} and \eqref{eq:atomic_min2} are convex, they are semi-infinite programs with infinitely many variables and cannot be practically solved. To solve them, we need a computable characterization for $\norm{\m{X}}_{\cA}$, which is challenging due to the nonconvex CA constraints $\abs{\psi_l}=1,l=1,\ldots,L$ on $\cA$ in \eqref{eq:A0}. To this end, we define a new set of atoms
\equ{
	\cA' \triangleq \lbra{ \m{a}(f) \m{\psi}: f \in \bT, \m{\psi} \in \bC^{1 \times  L}, \norm{\m{\psi}}_{\infty} = 1}. 
}
Define $\norm{\m{X}}_{\cA'}$ as that in \eqref{eq:atomic1} by replacing $\cA$ with $\cA'$.
We have the following result showing that the relaxation from $\cA$ to $\cA'$ is tight in the convex optimization framework.
\begin{lemma} \label{lem:twonorm}
	The two atomic norms associated with $\cA'$ and $\cA$ are equivalent, i.e., 
	\equ{
		\norm{\m{X}}_{\cA'} = \norm{\m{X}}_{\cA}. \label{eq:=}
	} 
\end{lemma}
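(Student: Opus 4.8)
The plan is to establish the two inequalities $\norm{\m{X}}_{\cA'} \le \norm{\m{X}}_{\cA}$ and $\norm{\m{X}}_{\cA} \le \norm{\m{X}}_{\cA'}$ separately. The first is immediate: every atom $\m{a}(f)\m{\psi}$ with $\abs{\psi_l} = 1$ for all $l$ also satisfies $\inftyn{\m{\psi}} = 1$, so $\cA \subseteq \cA'$; hence any decomposition of $\m{X}$ over $\cA$ that is feasible in \eqref{eq:atomic1} is also feasible over $\cA'$, and passing to the infimum gives $\norm{\m{X}}_{\cA'} \le \norm{\m{X}}_{\cA}$.

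For the reverse inequality, which is the crux, I would first record the geometric fact that the convex hull of the unit-modulus row vectors $\lbra{\m{\phi}\in\bC^{1\times L}: \abs{\phi_l}=1,\ l=1,\ldots,L}$ equals the closed polydisc $\lbra{\m{\psi}\in\bC^{1\times L}: \inftyn{\m{\psi}}\le 1}$. One inclusion is clear; for the other, given $\m{\psi}$ with $\inftyn{\m{\psi}}\le 1$, write $\psi_l = \tfrac{1}{2}\sbra{e^{i\alpha_l}+e^{i\beta_l}}$ with $\alpha_l=\theta_l+\gamma_l$, $\beta_l=\theta_l-\gamma_l$, where $\theta_l=\arg\psi_l$ and $\cos\gamma_l=\abs{\psi_l}$, and then form the $2^L$-term convex combination $\m{\psi}=\sum_{S\subseteq\lbra{1,\ldots,L}} 2^{-L}\m{\phi}_S$ in which $\sbra{\m{\phi}_S}_l = e^{i\alpha_l}$ if $l\in S$ and $\sbra{\m{\phi}_S}_l = e^{i\beta_l}$ otherwise; a coordinatewise count confirms the identity, and each $\m{\phi}_S$ has unit-modulus entries. (Alternatively one may invoke $\mathrm{conv}\sbra{A\times B}=\mathrm{conv}\sbra{A}\times\mathrm{conv}\sbra{B}$ together with the fact that the convex hull of the unit circle is the unit disk.)

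Given this, fix $\varepsilon>0$ and choose a decomposition $\m{X}=\sum_k b_k\, \m{a}(f_k)\m{\psi}_k$ with $b_k\ge 0$, $\inftyn{\m{\psi}_k}=1$, and $\sum_k b_k\le\norm{\m{X}}_{\cA'}+\varepsilon$. Applying the previous fact to each $\m{\psi}_k$ gives $\m{\psi}_k=\sum_m c_{k,m}\m{\phi}_{k,m}$ with $c_{k,m}\ge 0$, $\sum_m c_{k,m}=1$, and every $\m{\phi}_{k,m}$ unit-modulus, so $\m{X}=\sum_{k,m} b_k c_{k,m}\, \m{a}(f_k)\m{\phi}_{k,m}$ is a decomposition over $\cA$ whose total weight equals $\sum_{k,m} b_k c_{k,m}=\sum_k b_k\le\norm{\m{X}}_{\cA'}+\varepsilon$. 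Thus $\norm{\m{X}}_{\cA}\le\norm{\m{X}}_{\cA'}+\varepsilon$, and letting $\varepsilon\to 0$ completes the proof. I expect the only real subtlety to be the bookkeeping in the polydisc-hull step; once that identity is in place the norm comparison is a mechanical substitution, and attainment of the infimum never needs to be discussed since the total weight is preserved exactly under the substitution.
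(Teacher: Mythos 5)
Your proof is correct and takes essentially the same route as the paper: the crux in both is that any $\m{\psi}$ with $\inftyn{\m{\psi}}\le 1$ is a convex combination of unit-modulus vectors, obtained by splitting each entry as $\psi_l = \tfrac{1}{2}\sbra{e^{i\alpha_l}+e^{i\beta_l}}$, and then transferring this to the atoms $\m{a}(f)\m{\psi}$ (the paper phrases it as $\text{conv}\sbra{\cA}=\text{conv}\sbra{\cA'}$, you as a direct $\varepsilon$-argument on decompositions, which is equivalent). The only difference is cosmetic: since the weights $\sbra{\tfrac12,\tfrac12}$ are the same in every coordinate, the paper writes $\m{\psi}$ as the average of just two unit-modulus vectors $\m{\phi}_1,\m{\phi}_2$, so your $2^L$-term combination over subsets is correct but more bookkeeping than needed.
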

\begin{proof}
	See \cref{append:1}.
\end{proof}

SLRA usually refers to the problem of approximation of a given data matrix by another structured low-rank matrix, which provides a tool for fitting data by a low complexity model \cite{chu2003structured,markovsky2008structured,markovsky2013structured}. Various matrix structures including Hankel, Toeplitz, and Sylvester have been exploited in applications according to the specific structures of signal models. The low-rankness arises from the low complexity, such as sparsity, of the signal model. 

Inspired by SLRA and the construction of Hankel-Toeplitz matrices in \cite{wu2022maximum,wu2022Direction}, we show that $\norm{\m{X}}_{\cA'}$ can be cast as a convex SLRA problem. It can be regarded as a convex relaxation of the classical SLRA problem by swapping the approximation criterion and the low-rank constraint and using the trace norm minimization to promote the low-rankness. Formally, we have the following theorem, of which the proof will be deferred to the subsequent subsection.

\begin{theorem}  \label{thm:SLRA}
	For any $\m{X}\in \bC^{N\times L}$, let $\text{SLRA} \sbra{\m{X},n}$ denote the optimal value of the following convex SLRA problem:
	\equ{ 
		\begin{split}
			& \min_{\m{t}\in \bC^{2n-1},\m{Z}\in \bC^{\sbra{2n-1} \times L}} \frac{1}{n} \tr\sbra{\cT\m{t}}, \\
			&  \st 
			 \begin{bmatrix} \cT\overline{\m{t}} & \cH\overline{\m{Z}}_{:,l} \\ \cH\m{Z}_{:,l} & \cT\m{t} \end{bmatrix} \succeq \m{0}, \; l = 1,\ldots, L, \\
			& \qquad \qquad \quad \m{Z}_{\lbra{1,\ldots,N}} = \m{X}, \label{eq:SDP}
		\end{split}
	}
	where $n \ge \lceil \sbra{N+1}/2 \rceil$, $\cT\m{t} $ and $\cH\m{Z}_{:,l}$ are $n \times n$ Hermitian-Toeplitz and Hankel matrices, respectively, and $\m{Z}_{\lbra{1,\ldots,N}}$ is the submatrix of $\m{Z}$ formed by its first $N$ rows. Then, the following statements hold true:
	\begin{enumerate}
		\item $\text{SLRA} \sbra{\m{X},n}$ is monotonic non-decreasing with respect to $n$, i.e., 
		\equ{
			\text{SLRA} \sbra{\m{X},n} \le \text{SLRA} \sbra{\m{X},n+1}; \label{eq:nonde}
		}
		\item For any $n \ge \lceil \sbra{N+1}/2 \rceil$, we have  
		\equ{
			\text{SLRA} \sbra{\m{X},n} \le \norm{\m{X}}_{\cA'}; \label{eq:le}
		}
		\item If the optimal solution to matrix $\cT\m{t}$ in \eqref{eq:SDP} is rank-deficient, then we further have that
		\equ{
			\text{SLRA} \sbra{\m{X},n} = \norm{\m{X}}_{\cA'}.
		}
	\end{enumerate}
\end{theorem}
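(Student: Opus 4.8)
**The plan is to prove Theorem~\ref{thm:SLRA} by a combination of a direct feasible-point construction (for the ``$\le$'' inequality) and a dual/Vandermonde-decomposition argument (for the ``$=$'' under rank-deficiency), with the monotonicity following from a simple embedding.**

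For the monotonicity statement \eqref{eq:nonde}, I would argue that any feasible point $(\m{t},\m{Z})$ for the size-$n$ problem can be lifted to a feasible point for the size-$(n{+}1)$ problem with the same objective, or conversely that a size-$(n{+}1)$ feasible point restricts to a size-$n$ one; the cleanest route is to take an optimal $(n{+}1)$-solution, extract the leading $n\times n$ principal blocks of each PSD block matrix (principal submatrices of PSD matrices are PSD, and Toeplitz/Hankel structure is preserved under taking the leading principal block), check that the linear constraint $\m{Z}_{\{1,\dots,N\}}=\m{X}$ still holds since $N\le 2n-1$, and observe that $\frac1n\tr(\cT\m{t})$ does not increase — here one uses that the diagonal entries of a PSD Toeplitz matrix are all equal to a common nonnegative value $t_n$, so $\frac1n\tr(\cT\m{t}) = t_n$ is the \emph{same} for the $n\times n$ and $(n{+}1)\times(n{+}1)$ Toeplitz matrices built from the same sequence; the inequality direction then comes from the fact that shrinking $n$ enlarges the feasible set. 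I would need to be careful about which direction the embedding goes and to invoke Lemma~\ref{lem:twonorm} only where the atomic norm $\norm{\cdot}_{\cA'}$ appears.

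For the upper bound \eqref{eq:le}, I would start from a near-optimal atomic decomposition $\m{X} = \sum_k c_k\, \m{a}(f_k)\m{\psi}_k$ with $c_k>0$, $\norm{\m{\psi}_k}_\infty=1$, and $\sum_k c_k$ close to $\norm{\m{X}}_{\cA'}$. Extend each Vandermonde vector $\m{a}(f_k)$ to length $2n-1$ and set $\m{t} = \sum_k c_k\, \m{a}_{2n-1}(f_k)\,\m{a}_{n}(f_k)^H$-type Toeplitz sequence — concretely choose $\m{t}$ so that $\cT\m{t} = \sum_k c_k\, \m{a}_n(f_k)\m{a}_n(f_k)^H$ (a PSD Toeplitz matrix with $\tr(\cT\m{t}) = n\sum_k c_k$), and choose $\m{Z}_{:,l}$ so that $\cH\m{Z}_{:,l} = \sum_k c_k\,\psi_{k,l}\, \m{a}_n(f_k)\m{a}_n(f_k)^T$ (the Hankel matrix generated by the length-$(2n-1)$ vector $\sum_k c_k\psi_{k,l}\,\m{a}_{2n-1}(f_k)$, whose first $N$ entries reproduce $\m{X}_{:,l}$). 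The $2\times 2$ block matrix then factors as $\sum_k c_k\, \m{v}_{k,l}\m{v}_{k,l}^H$ with $\m{v}_{k,l} = \begin{bmatrix}\overline{\m{a}_n(f_k)}\\ \psi_{k,l}\,\m{a}_n(f_k)\end{bmatrix}$ — here the unit-modulus bound $|\psi_{k,l}|\le 1$ is exactly what makes each rank-one term PSD (one checks $\begin{bmatrix}1 & \overline{\psi}\\ \psi & 1\end{bmatrix}\succeq 0$ iff $|\psi|\le1$), so this is the place where relaxing $\cA$ to $\cA'$ is used in an essential way. The objective is $\frac1n\tr(\cT\m{t}) = \sum_k c_k \to \norm{\m{X}}_{\cA'}$, giving \eqref{eq:le}.

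For the equality under rank-deficiency of the optimal $\cT\m{t}$, I would invoke the Carath\'eodory/Vandermonde decomposition of PSD Toeplitz matrices: a rank-deficient $n\times n$ PSD Toeplitz matrix admits $\cT\m{t} = \sum_k c_k\,\m{a}_n(f_k)\m{a}_n(f_k)^H$ with $c_k>0$ and fewer than $n$ distinct frequencies, and this decomposition is unique. Plugging into the PSD block constraint and using a Schur-complement argument, the Hankel block $\cH\m{Z}_{:,l}$ must live in the range of $\cT\m{t}$ in a way that forces $\cH\m{Z}_{:,l} = \sum_k c_k\,\psi_{k,l}\,\m{a}_n(f_k)\m{a}_n(f_k)^T$ for some scalars $\psi_{k,l}$ with $|\psi_{k,l}|\le 1$ (the $2\times2$ principal minor on the $k$-th ``mode'' enforces the modulus bound). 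Reading off the first $N$ rows gives $\m{X}_{:,l} = \sum_k c_k\psi_{k,l}\,\m{a}(f_k)$, i.e.\ an $\cA'$-decomposition of $\m{X}$ with total weight $\sum_k c_k = \frac1n\tr(\cT\m{t}) = \text{SLRA}(\m{X},n)$, hence $\norm{\m{X}}_{\cA'}\le \text{SLRA}(\m{X},n)$; combined with \eqref{eq:le} this yields equality. \textbf{The main obstacle I anticipate is the middle step of this last part}: rigorously deducing that a Hankel matrix whose associated $2\times2$ block with two \emph{fixed} Toeplitz diagonal blocks is PSD must share the same Vandermonde nodes and satisfy the entrywise modulus bound — this requires carefully combining the range inclusion from the Schur complement with the Toeplitz/Hankel structural constraints (not every matrix in $\mathrm{range}(\cT\m{t})$ is Hankel), and uniqueness of the Vandermonde decomposition is what pins the nodes down. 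I would handle this by working in the spectral factorization coordinates where $\cT\m{t}=\m{A}_n\m{C}\m{A}_n^H$ with $\m{A}_n=[\m{a}_n(f_1),\dots]$ full column rank, writing $\cH\m{Z}_{:,l}=\m{A}_n \m{W}_l \m{A}_n^T$ for some $\m{W}_l$ forced by the range condition, and then using the Hankel (persymmetry) structure together with the PSD $2\times2$ block to show $\m{W}_l$ is diagonal with $\|\,\m{W}_l\,\|$ bounded appropriately relative to $\m{C}$.
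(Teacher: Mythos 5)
Your proposal is correct and takes essentially the same route as the paper: principal-submatrix restriction for monotonicity, the same feasible-point construction for \eqref{eq:le} (your $2\times 2$ modulus criterion is the paper's Schur-complement step; note only that the block matrix equals $\sum_k c_k \m{v}_{k,l}\m{v}_{k,l}^H$ plus an extra PSD term unless $|\psi_{k,l}|=1$, which does not affect the conclusion), and, for the equality under rank deficiency, the Carath\'eodory--Fej\'er decomposition combined with range inclusion and diagonality of the middle factor. The step you flag as the main obstacle — that $\cH\m{Z}^*_{:,l}=\m{A}_n\m{W}_l\m{A}_n^T$ with distinct nodes forces $\m{W}_l$ to be diagonal — is precisely \cref{lem:H} (quoted from \cite{yang2016vandermonde}), which the paper invokes at exactly that point, so your plan closes the same way; the paper additionally uses optimality of $\cT\m{t}^*$ to get $\norm{\m{\Psi}^*_{k,:}}_\infty=1$ exactly, whereas in your version a simple rescaling of the atoms with $|\psi_{k,l}|\le 1$ yields the same bound $\norm{\m{X}}_{\cA'}\le \text{SLRA}(\m{X},n)$.
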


\Cref{thm:SLRA} shows that the SLRA in \eqref{eq:SDP} provides a lower bound for the atomic norm $\norm{\m{X}}_{\cA'}$ and is an exact characterization of $\norm{\m{X}}_{\cA'}$ when the optimal solution to $\cT\m{t}$ is rank-deficient. Interestingly, note that the trace norm in the objective of the SLRA promotes low-rankness of the solution to $\cT\m{t}$. Therefore, if $n$ is chosen large enough, then it is expected that the solution to $\cT\m{t}$ becomes rank-deficient. In fact, it is found empirically that $n = \lceil \sbra{N+1}/2 \rceil$ is usually sufficient if $K < n$.

It is worth noting that by the proof of \Cref{thm:SLRA} in \Cref{sec:proof}, the rank-deficient optimal solution $\cT\m{t}^*$ is given by \eqref{eq:T2}, where $\m{X}=\m{A}\m{B}^*\m{\Psi}^*$ is the atomic decomposition achieving the atomic norm. Evidently, $\cT\m{t}^*$ captures all the information of the amplitudes and the frequencies. Therefore, it is seen from \eqref{eq:SDP} that we have constructed a number $L$ of PSD Hankel-Toeplitz block matrices for $L$ channels. By letting the $L$ block matrices share the same Toeplitz submatrix, we have successfully imposed the constraints that all the channels share the same amplitudes and frequencies. We show an example of $L=3$ Hankel-Toeplitz block matrices in \cref{fig:HT}.

\begin{figure*}[htb]
	\centering
	\includegraphics[width=10cm]{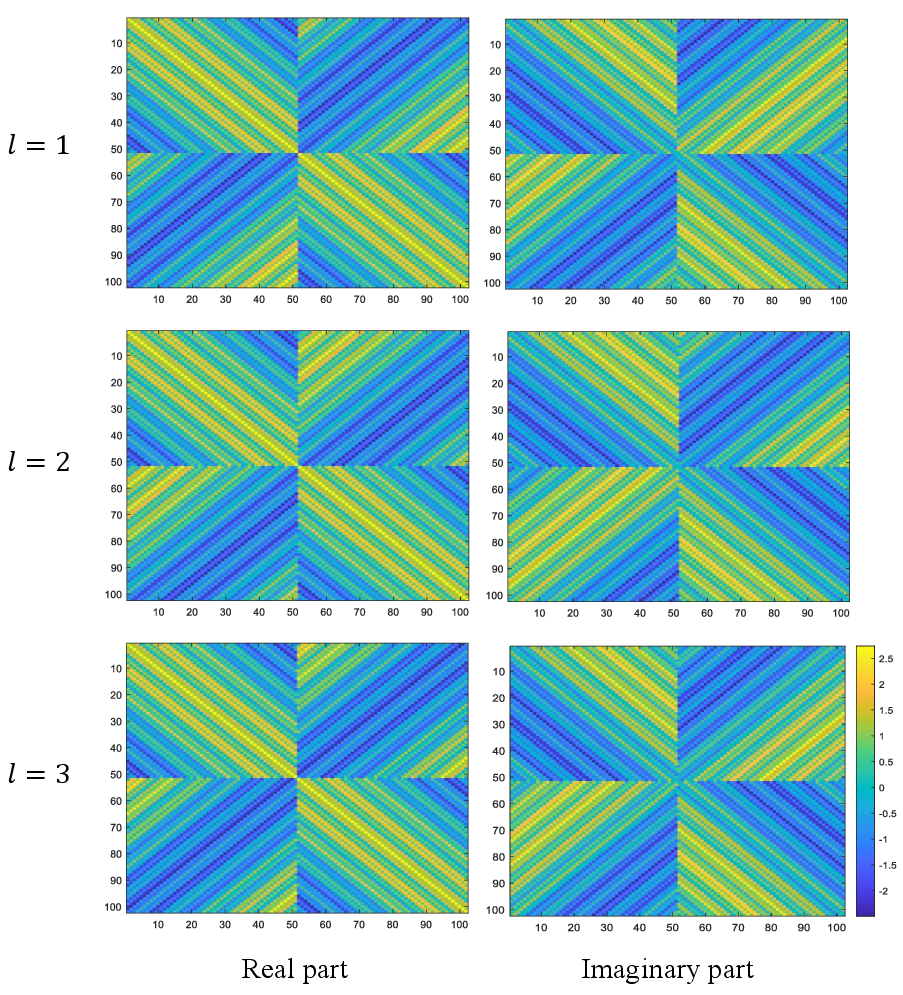}
	\caption{An example of a number $L=3$ of Hankel-Toeplitz block matrices $\lbra{\protect \begin{bmatrix} \cT\overline{\m{t}} & \cH\overline{\m{X}}_{:,l} \protect \\ \cH\m{X}_{:,l} & \cT\m{t} \protect \end{bmatrix},l=1,2,3}$, where the CA signals $\m{X}=\m{A}\sbra{\m{f}}\m{B}\m{\Phi} \in \bC^{N\times L}$ and $\cT\m{t} = \m{A}_n\sbra{\m{f}} \m{B} \m{A}^H_n\sbra{\m{f}} \in \bC^{n\times n}$ (the definition of $\m{A}_n\sbra{\m{f}}$ is given in \cref{lem:T}) with $N=101$, $n=51$, $\m{f}=[-0.1,0.01,0.35]^T$, and $\m{B} = \diag\sbra{ [0.71,1.19,0.84]}$.} \label{fig:HT}
\end{figure*} 

Making use of \eqref{eq:=} and \cref{thm:SLRA}, we propose the following convex SLRA problems:
\equ{ 
	\begin{split} 
		& \min_{\m{t},\m{Z}}  \frac{1}{n} \tr\sbra{\cT\m{t}},  \\
		& \st  \begin{bmatrix} \cT\overline{\m{t}} & \cH\overline{\m{Z}}_{:,l} \\ \cH\m{Z}_{:,l} & \cT\m{t} \end{bmatrix} \succeq \m{0}, \; l = 1,\ldots, L, \\
		& \qquad \qquad \quad \m{Z}_{\Omega} = \m{X}^\star_{\Omega}, \label{eq:p_SDP}
	\end{split}
}
and
\equ{ 
	\begin{split} 
		& \min_{\m{t},\m{Z}} \frac{1}{2}\frobn{\m{Y}_{\Omega}-\m{Z}_{\Omega}}^2 +  \frac{\tau}{n} \tr\sbra{\cT\m{t}}, \\
		& \st \begin{bmatrix} \cT\overline{\m{t}} & \cH\overline{\m{Z}}_{:,l} \\ \cH\m{Z}_{:,l} & \cT\m{t} \end{bmatrix} \succeq \m{0}, \; l = 1,\ldots, L, \label{eq:p_SDP_completion}
	\end{split}
}
corresponding to the noiseless completion problem in \eqref{eq:p1} and the denoising problem in \eqref{eq:atomic_min2}, respectively. 

Both the SLRA problems in \eqref{eq:p_SDP} and \eqref{eq:p_SDP_completion} are SDP, which can be solved using an off-the-shelf SDP solver such as SDPT3 \cite{toh2012implementation}. Given the optimal solution $(\m{t}^*,\m{Z}^*)$, the frequencies can be obtained by finding the Vandermonde decomposition of $\cT\m{t}^*$ in \eqref{eq:T2} via a subspace-based method such as root-MUSIC \cite{barabell1983improving}, and then the amplitudes and phases can be retrieved from $\m{Z}^*$.

\subsection{Proof of \Cref{thm:SLRA}} \label{sec:proof}
The following lemmas, including the classical Carath\'{e}odory-Fej\'{e}r's Theorem for Toeplitz matrices \cite[Theorem 11.5]{yang2018sparse} and a lemma regarding factorization of Hankel matrices \cite[Lemma 4]{yang2016vandermonde}, will play key roles in our proof.
\begin{lemma}(\cite[Theorem 11.5]{yang2018sparse}) \label{lem:T}
	Any PSD Toeplitz matrix $\cT\m{t}\in \bC^{n\times n}$ of rank $K<n$ admits the unique Vandermonde decomposition $\cT\m{t} = \m{A}_n(\m{f}) \diag\sbra{\m{p}} \m{A}^H_n(\m{f})$ where $\lbra{f_k}$ are distinct, $p_k>0,k=1,\ldots,K$, and $\m{A}_n(\m{f}) = \mbra{\m{a}_n\sbra{f_1},\ldots,\m{a}_n\sbra{f_K}}$ is an $n\times K$ Vandermonde matrix with $\m{a}_n\sbra{f_k} = \mbra{1,e^{i2\pi f_k},\ldots,e^{i2\pi f_k\sbra{n-1}}}^T$.
\end{lemma}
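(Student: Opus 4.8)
The plan is to read off the frequencies as the roots of a single annihilating polynomial attached to the kernel of $\cT\m{t}$, and then to recover the weights by combining the Toeplitz structure with positivity; throughout write $\m{T}=\cT\m{t}$ and let $\m{T}_r$ denote its leading $r\times r$ principal submatrix. First I would establish that $\m{T}_K\succ\m{0}$ while $\m{T}_{K+1}$ is singular. Let $r$ be the smallest index with $\m{T}_r$ singular, so $\m{T}_{r-1}\succ\m{0}$ and a null vector $\m{u}$ of $\m{T}_r$ must have nonzero last entry. Since every contiguous $r\times r$ principal submatrix of the Toeplitz matrix $\m{T}$ equals $\m{T}_r$, each shifted embedding of $\m{u}$ into $\bC^n$ yields a zero quadratic form and hence, by $\m{T}\succeq\m{0}$, lies in $\ker\sbra{\m{T}}$; this produces $n-r+1$ linearly independent null vectors and forces $\rank\sbra{\m{T}}\le r-1$. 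As $\rank\sbra{\m{T}}=K$, we get $r=K+1$, so $\ker\sbra{\m{T}_{K+1}}$ is one-dimensional, spanned by $\m{u}=\mbra{u_0,\ldots,u_K}^T$ with $u_K\neq 0$; normalizing $u_K=1$, set $u\sbra{z}=\sum_{j=0}^K u_j z^j$, a polynomial of exact degree $K$.

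Next I would identify the range of $\m{T}$. The same reasoning shows the shifted embeddings $\m{u}^{\sbra{m}}$ of $\m{u}$, for $m=0,\ldots,n-K-1$, all lie in $\ker\sbra{\m{T}}$; being linearly independent and numbering $n-K=\dim\ker\sbra{\m{T}}$, they span it. A short computation gives $\m{u}^{\sbra{m}H}\m{g}\sbra{z}=z^m u^*\sbra{z}$ for the Vandermonde-type vector $\m{g}\sbra{z}=\mbra{1,z,\ldots,z^{n-1}}^T$ and $u^*\sbra{z}=\sum_j \overline{u_j}z^j$, so $\m{g}\sbra{z}\in\text{range}\sbra{\m{T}}=\ker\sbra{\m{T}}^\perp$ exactly when $z$ is a root of $u^*$. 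Writing the roots of $u^*$ as $z_1,\ldots,z_K$ (that they are simple, like their location on the unit circle, is a consequence of the positivity argument below), the full-rank Vandermonde vectors $\lbra{\m{g}\sbra{z_k}}$ span the $K$-dimensional $\text{range}\sbra{\m{T}}$.

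Finally I would pin down the nodes and weights. With $\m{G}=\mbra{\m{g}\sbra{z_1},\ldots,\m{g}\sbra{z_K}}$, the coincidence of ranges yields $\m{T}=\m{G}\m{C}\m{G}^H$ for a unique invertible Hermitian $\m{C}$, and the congruence $\m{C}=\m{G}^\dagger \m{T}\sbra{\m{G}^\dagger}^H$ together with $\m{T}\succeq\m{0}$ forces $\m{C}\succ\m{0}$, so every $C_{kk}>0$. Equating $\m{T}$ with its one-step shift and using linear independence of the Vandermonde rows gives $C_{jk}\sbra{z_j\overline{z_k}-1}=0$ for all $j,k$; the diagonal relation with $C_{kk}>0$ forces $\abs{z_k}=1$, whence $z_k=e^{i2\pi f_k}$ with distinct $f_k\in\bT$ and $\m{g}\sbra{z_k}=\m{a}_n\sbra{f_k}$, while the off-diagonal relation with distinctness forces $\m{C}=\diag\sbra{\m{p}}$. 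This delivers $\cT\m{t}=\m{A}_n\sbra{\m{f}}\diag\sbra{\m{p}}\m{A}^H_n\sbra{\m{f}}$ with $p_k=C_{kk}>0$, and the same positivity precludes repeated roots, since a double root would introduce a confluent (Jordan) component incompatible with $\m{C}\succ\m{0}$. Uniqueness follows because the frequencies are forced to be the roots of the unique kernel polynomial $u^*$, and linear independence of the columns of $\m{A}_n\sbra{\m{f}}$ then determines $\m{p}$. The main obstacle is precisely this last interplay: extracting the unit-circle location and the simplicity of the nodes, where positive semidefiniteness (through $\m{C}\succ\m{0}$) is indispensable and where a general indefinite Hermitian Toeplitz matrix would otherwise admit off-circle or repeated nodes.
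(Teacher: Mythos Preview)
The paper does not prove this lemma; it is quoted verbatim as the classical Carath\'{e}odory--Fej\'{e}r theorem with a citation to \cite[Theorem 11.5]{yang2018sparse} and used as a black box in the proof of \Cref{thm:SLRA}. There is therefore no in-paper argument to compare against.

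Your proposal is one of the standard proofs and is essentially correct: locate the annihilating polynomial in the one-dimensional kernel of $\m{T}_{K+1}$, propagate it through the Toeplitz shift structure to span $\ker\sbra{\m{T}}$, read off the nodes as the roots of $u^*$, and then use the shift identity $\m{C}=\m{D}\m{C}\m{D}^H$ together with $\m{C}\succ\m{0}$ to force $\abs{z_k}=1$ and $\m{C}$ diagonal. Two places deserve a line more of care. First, you should also observe $u_0\neq 0$ (same argument as $u_K\neq 0$, applied to the bottom-right $K\times K$ principal minor), so that $u^*$ genuinely has $K$ roots in $\bC\setminus\lbra{0}$. Second, the simplicity claim is only gestured at; it is true that a repeated root forces a confluent column pair $\sbra{\m{g}\sbra{z_0},\m{g}'\sbra{z_0}}$ and a Jordan-type shift relation $\widetilde{\m{C}}_0=\m{J}_0\widetilde{\m{C}}_0\m{J}_0^H$ on the corresponding $2\times 2$ block, and a direct computation of the $(1,2)$ entry of that relation with $\abs{z_0}=1$ yields $\widetilde{C}_{0,22}=0$, contradicting $\widetilde{\m{C}}_0\succ\m{0}$. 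Writing this two-line computation explicitly would close the only real gap.
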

\begin{lemma}(\cite[Lemma 4]{yang2016vandermonde}) \label{lem:H}
	If a Hankel matrix $\cH\m{x}$ can be factorized as $\cH\m{x} = \m{A}_n(\m{f}) \m{G}\m{A}^T_n(\m{f})$ where $\lbra{f_k}$ are distinct and $\m{G}\in \bC^{K \times K}$, $K < n$, then $\m{G}$ must be a diagonal matrix.
\end{lemma}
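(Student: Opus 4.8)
The plan is to exploit the defining shift-invariance of a Hankel matrix to convert the given factorization into a homogeneous Vandermonde identity that forces the off-diagonal entries of $\m{G}$ to vanish. Write $z_k = e^{i2\pi f_k}$, so that the $\sbra{j,k}$ entry of $\m{A}_n\sbra{\m{f}}$ is $z_k^{j-1}$ and the $\sbra{j,l}$ entry of the factorization is $\sum_{k,m} G_{k,m} z_k^{j-1} z_m^{l-1}$. The crucial structural fact is that a Hankel matrix satisfies $\sbra{\cH\m{x}}_{j+1,l} = \sbra{\cH\m{x}}_{j,l+1}$ for all $1 \le j,l \le n-1$, since both equal the same anti-diagonal entry $x_{j+l}$. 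Substituting the factorization into this identity and subtracting yields, for all such $j,l$, the relation $\sum_{k,m} G_{k,m}\sbra{z_k - z_m} z_k^{j-1} z_m^{l-1} = 0$.

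Next I would package this as a matrix equation. Defining $H_{k,m} = G_{k,m}\sbra{z_k - z_m}$, the displayed relation reads $\m{A}_{n-1}\sbra{\m{f}} \m{H} \m{A}_{n-1}^T\sbra{\m{f}} = \m{0}$, where $\m{A}_{n-1}\sbra{\m{f}}$ is the $\sbra{n-1}\times K$ Vandermonde matrix built from the exponents $0,\ldots,n-2$. Here the hypothesis $K < n$ is used decisively: it gives $n-1 \ge K$, so that the distinctness of the $f_k$ makes $\m{A}_{n-1}\sbra{\m{f}}$ of full column rank $K$. Consequently $\m{A}_{n-1}\sbra{\m{f}}$ admits a left inverse and $\m{A}_{n-1}^T\sbra{\m{f}}$ a right inverse, and multiplying the matrix equation on the left and on the right by these respectively collapses it to $\m{H} = \m{0}$.

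Finally I would read off the conclusion. For each $k \ne m$, distinctness of the frequencies gives $z_k \ne z_m$, so $H_{k,m} = 0$ forces $G_{k,m} = 0$; the diagonal entries are left unconstrained, since the factor $z_k - z_k$ already annihilates them, which is exactly the assertion that $\m{G}$ is diagonal.

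I do not anticipate a genuinely hard step here, as the argument is elementary linear algebra once it is set up correctly. The one point demanding care is the rank bookkeeping: the whole argument hinges on the reduced Vandermonde matrix $\m{A}_{n-1}\sbra{\m{f}}$ retaining full column rank after dropping one row, which is precisely why the strict inequality $K < n$ (rather than $K \le n$) appears in the hypothesis. I would therefore verify explicitly that the index ranges in the Hankel shift identity supply $n-1$ distinct exponents in each factor, so that no spurious rank deficiency is introduced when passing from $\m{A}_n\sbra{\m{f}}$ to $\m{A}_{n-1}\sbra{\m{f}}$.
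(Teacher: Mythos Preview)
Your proof is correct. The paper does not actually prove this lemma; it is quoted verbatim from \cite[Lemma 4]{yang2016vandermonde} and used as a black box in the proof of \Cref{thm:SLRA}, so there is no in-paper argument to compare against.

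Your approach---exploiting the Hankel shift identity $\sbra{\cH\m{x}}_{j+1,l}=\sbra{\cH\m{x}}_{j,l+1}$ to produce the commutator-type relation $\m{A}_{n-1}\sbra{\m{f}}\m{H}\m{A}_{n-1}^T\sbra{\m{f}}=\m{0}$ with $H_{k,m}=G_{k,m}\sbra{z_k-z_m}$, and then cancelling the full-rank Vandermonde factors---is clean and self-contained. The rank bookkeeping you flag is exactly right: the strict inequality $K<n$ is what guarantees $n-1\ge K$ so that $\m{A}_{n-1}\sbra{\m{f}}$ retains full column rank after losing one row. One small point you might state explicitly for completeness is that distinctness of the $f_k$ in $\bT=[-1/2,1/2)$ does imply distinctness of the $z_k=e^{i2\pi f_k}$, since the exponential is injective on that half-open interval; this is needed both for the Vandermonde rank argument and for the final step $z_k\ne z_m\Rightarrow G_{k,m}=0$.
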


Now we begin to prove \Cref{thm:SLRA}. To show the first statement, note that the SLRA problem in \eqref{eq:SDP} with the dimension parameter $n$ shares the same objective function as that with $n+1$, but the Hankel-Toeplitz matrices in the two SLRA problems have different sizes. For each $l$, the Hankel-Toeplitz matrix in the SLRA problem with $n$ is a principal submatrix of that in the one with $n+1$. As compared to the SLRA problem with $n$, the one with $n+1$ has stronger PSD constraints and thus a smaller feasible domain, resulting in an increase in the optimal value of the minimization problem. Consequently, we have \eqref{eq:nonde}.

To show the second statement, for any atomic decomposition of $\m{X}$ given by:
\equ{
	\m{X} = \sum^{K'}_{k=1} \m{a}(f_k) b_k 
	\m{\Psi}_{k,:}, 
}
where $b_k > 0$, $\m{\Psi}_{k,:} \in \bC^{1\times L}$ and $\norm{\m{\Psi}_{k,:}}_{\infty} = 1$, we let $\m{Z} = \sum^{K'}_{k=1} \m{a}_{2n-1}(f_k) b_k \m{\Psi}_{k,:}$, where $\m{a}_{2n-1}\sbra{f_k} = \mbra{1,e^{i2\pi f_k},\ldots,e^{i2\pi f_k\sbra{2n-2}}}^T$, and thus $\m{Z}_{\lbra{1,\ldots,N}}=\m{X}$. It can easily be shown that
\equ{
	\cH\m{Z}_{:,l} = \m{A}_n \diag\sbra{\m{B}\m{\Psi}_{:,l}} \m{A}_n^T,
}
where $\m{B}=\diag\sbra{\mbra{b_1,\ldots,b_{K'}}}$ and $\m{A}_n = \m{A}_n(\m{f}) $ is of dimension $n\times K'$.
Let $\cT \m{t}$ be 
\equ{
	\cT \m{t} = \m{A}_n \m{B} \m{A}_n^H. \label{eq:T}
} 
It follows that for each $l=1,\ldots,L$,
\equ{ \label{eq:tmp1}
	\begin{split}
		\begin{bmatrix} \cT\overline{\m{t}} & \cH\overline{\m{Z}_{:,l}} \\ \cH\m{Z}_{:,l} & \cT\m{t} \end{bmatrix} & = \begin{bmatrix} \overline{\m{A}_n}\m{B} \m{A}^T & \overline{\m{A}_n} \diag\sbra{\m{B}\overline{\m{\Psi}}_{:,l}} \m{A}_n^H \\  \m{A}_n \diag\sbra{\m{B}\m{\Psi}_{:,l}} \m{A}_n^T & \m{A}_n\m{B} \m{A}_n^H \end{bmatrix} \\
		& = \begin{bmatrix} \overline{\m{A}_n} & \m{0} \\ \m{0} & \m{A}_n
		\end{bmatrix} \begin{bmatrix} \m{B} & \diag\sbra{\m{B}\overline{\m{\Psi}}_{:,l}} \\ \diag\sbra{\m{B}\m{\Psi}_{:,l}}  & \m{B} \end{bmatrix}
		\begin{bmatrix} \overline{\m{A}_n} & \m{0} \\ \m{0} & \m{A}_n \end{bmatrix}^H.
	\end{split}	
}
	We have the factorization
	\equ{ \label{eq:tmp2}
		\begin{split}
			& \begin{bmatrix} \m{B} & \diag\sbra{\m{B}\overline{\m{\Psi}}_{:,l}} \\ \diag\sbra{\m{B}\m{\Psi}_{:,l}}  & \m{B} \end{bmatrix} \\
			& = \begin{bmatrix} \m{I} &  \diag\sbra{\m{B}\overline{\m{\Psi}}_{:,l}} \m{B}^{-1} \\ \m{0} & \m{I} \end{bmatrix}  \cdot \begin{bmatrix} \m{B}-\diag\sbra{\m{B}\overline{\m{\Psi}}_{:,l}} \m{B}^{-1}\diag\sbra{\m{B}\m{\Psi}_{:,l}} & \m{0} \\ \m{0} & \m{B} \end{bmatrix} \\
			& \qquad \cdot \begin{bmatrix} \m{I} &  \diag\sbra{\m{B}\overline{\m{\Psi}}_{:,l}} \m{B}^{-1} \\ \m{0} & \m{I} \end{bmatrix}^H,
		\end{split}
	}
	where the Schur complement of $\m{B}$ satisfies
	\equ{
		\m{B}-\diag\sbra{\m{B}\overline{\m{\Psi}}_{:,l}} \m{B}^{-1}\diag\sbra{\m{B}\m{\Psi}_{:,l}} = \m{0}.
	}
	Since $\m{B}\succ \m{0}$, the matrix in \cref{eq:tmp2} is PSD and thus the matrix in \cref{eq:tmp1} is PSD for each $l=1,\ldots,L$.
Consequently, we have constructed a feasible solution $\sbra{\m{t},\m{Z}}$ to the SLRA problem in \eqref{eq:SDP}, at which the objective function equals $\frac{1}{n} \tr\sbra{\cT\m{t}} = \sum^{K'}_{k=1} b_k$. It follows that the optimal value $\text{SLRA}(\m{X},n) \le \sum^{K'}_{k=1} b_k$. Since the inequality holds for any atomic decomposition of $\m{X}$, we have that $\text{SLRA}(\m{X},n) \le \norm{\m{X}}_{\cA'}$ by the definition of the atomic norm.

To show the third statement, we suppose that $(\m{t}^*,\m{Z}^*)$ is an optimal solution to the SLRA problem in \eqref{eq:SDP}, with $\rank \sbra{\cT\m{t}^*}=r<n$. It follows from \eqref{eq:SDP} and the column inclusion property of PSD block matrices \cite[Observation 7.1.10]{horn2012matrix}\cite[Proposition 2]{horn2020rank} that
\equ{
	\cT\m{t}^* \succeq \m{0} \quad \text{and} \quad \cH\m{Z}^*_{:,l} \in \text{range}\sbra{\cT\m{t}^*}, \; l=1,\ldots,L. \label{eq:THconst}
}
Using \cref{lem:T}, there exist distinct $\lbra{f^*_k}_{k=1}^{r}$ and $\lbra{b^*_k>0}_{k=1}^{r}$ such that $\cT\m{t}^*$ admits the Vandermonde decomposition 
\equ{
	\cT \m{t}^* = \m{A}_n \m{B}^* \m{A}_n^H, \label{eq:T2}
} 
where $\m{A}_n$ is redefined as an $n\times r$ Vandermonde matrix with respect to $\lbra{f^*_k}$ and $\m{B}^*=\diag\sbra{\mbra{b^*_1,\ldots,b^*_{r}}}$.
It then follows from \eqref{eq:THconst} that there exist $r\times n$ matrices $\lbra{\m{G}^l}$ such that for each $l=1,\ldots,L$,
\equ{
	\cH\m{Z}^*_{:,l} = \m{A}_n \m{G}^l = \sbra{\m{G}^l}^T  \m{A}_n^T,
}
where the second equality follows from the symmetry of $\cH\m{Z}^*_{:,l}$. We further have $\m{G}^l=\m{A}_n^\dagger\sbra{\m{G}^l}^T\m{A}_n^T$ and thus
\equ{
	\cH\m{Z}^*_{:,l} = \m{A}_n \widetilde{\m{S}}^l \m{A}_n^T, \label{eq:HVandec}
}
where $\widetilde{\m{S}}^l = \m{A}_n^\dagger\sbra{\m{G}^l}^T$ is an $K\times K$ matrix. Given the factorization in  \eqref{eq:HVandec} with $r<n$, it follows from \cref{lem:H} that $\widetilde{\m{S}}^l$ must be diagonal, i.e., $\widetilde{\m{S}}^l = \diag\sbra{\m{S}_{:,l}} $ where $\m{S} \in \bC^{r \times L}$, so that \eqref{eq:HVandec} is a Vandermonde decomposition.
Applying \eqref{eq:T}, \eqref{eq:HVandec} and the PSD constraints in \eqref{eq:SDP}, we have that
\equ{
	\begin{split}
		\begin{bmatrix} \cT\overline{\m{t}^*} & \cH\overline{\m{Z}^*}_{:,l} \\ \cH\m{Z}^*_{:,l} & \cT\m{t}^* \end{bmatrix}  = \begin{bmatrix} \overline{\m{A}_n} & \m{0} \\ \m{0} & \m{A}_n
		\end{bmatrix} \begin{bmatrix} \m{B}^* & \diag\sbra{\overline{\m{S}}_{:,l}} \\ \diag\sbra{\m{S}_{:,l}}  & \m{B}^* \end{bmatrix}
		\begin{bmatrix} \overline{\m{A}_n} & \m{0} \\ \m{0} & \m{A}_n \end{bmatrix}^H \succeq \m{0},
	\end{split}	\nonumber
}
and thus $\begin{bmatrix} \m{B}^* & \diag\sbra{\overline{\m{S}}_{:,l}} \\ \diag\sbra{\m{S}_{:,l}}  & \m{B}^* \end{bmatrix} \succeq \m{0}$.
It follows from \cref{eq:tmp2} that this is true only if the Schur complement 
\equ{
	\m{B}^* - \diag\sbra{\overline{\m{S}}_{:,l}} \sbra{\m{B}^*}^{-1} \diag\sbra{\m{S}_{:,l}} \succeq \m{0},
}
or equivalently,
\equ{
	\abs{S_{k,l}} \le b^*_k, \; k = 1,\ldots,r.
} 

Since $\frac{1}{n} \tr\sbra{\cT\m{t}^*} = \sum^{r}_{k=1} b^*_k$ is minimized in the objective, we must have 
\equ{
	b^*_k = \max_{l} \abs{S_{k,l}}.
} 
Then we have $S_{k,l} = b^*_k \psi^*_{k,l}$ with $\abs{\psi^*_{k,l}}\le 1, \; k = 1,\ldots,r, \; l=1,\ldots,L$. It follows from \eqref{eq:HVandec} that $\m{Z}^* = \sum_{k=1}^{r} \m{a}_{2n-1}\sbra{f^*_k}b^*_k \m{\Psi}^*_{k,:}$ with $\norm{\m{\Psi}^*_{k,:}}_{\infty} = 1$ and then $\m{X}=\m{Z}^*_{\lbra{1,\ldots,N}}=\sum^{r}_{k=1} \m{a}(f^*_k) b^*_k \m{\Psi}^*_{k,:}$. It follows that $\text{SLRA}(\m{X},n) = \sum_{k=1}^{r} b^*_k \ge \norm{\m{X}}_{\cA'}$. Combining this inequality and the result in the second statement, we conclude that $\text{SLRA}(\m{X},n) = \norm{\m{X}}_{\cA'}$, completing the proof.

\subsection{Why the Proposed SLRA in \eqref{eq:SDP} is Essential to SACA?} \label{sec:essential}
The computable characterizations of existing ANM methods \cite{tang2013compressed,bhaskar2013atomic,yang2016exact,yang2018sample,li2015off,fernandez2016super,steffens2018compact,li2018atomic} are based on a PSD Toeplitz-only block matrix. In particular, the characterization of ANM for general multichannel signals \cite{yang2016exact,li2015off} is given by:
\equ{
	\begin{split}
		\min_{\m{t}\in \bC^{2N-1},\m{W}\in \bC^{L\times L}} \frac{1}{2\sqrt{N}}\sbra{\tr\sbra{\cT\m{t}}+\tr\sbra{\m{W}}},  \st \begin{bmatrix} \m{W} & \m{X}^H \\ \m{X} & \cT\m{t}  \end{bmatrix} \succeq \m{0}.
	\end{split} \label{eq:SDP_T}
}
This formulation constrains the same frequencies among the channels, but it does not utilize the CA property and thus cannot characterize the CA atomic norm in \eqref{eq:atomic1} concerned in this paper.

As compared to \eqref{eq:SDP_T}, the proposed SLRA problem in \eqref{eq:SDP} for SACA have two main differences: 1) a number $L$ of PSD block matrices are constructed in \eqref{eq:SDP}, separately for each channel, that share some common submatrices, while a single PSD block matrix is used for all channels in \eqref{eq:SDP_T}, and 2) the Hankel-Toeplitz structured block matrices are used in \eqref{eq:SDP} instead of the Toeplitz-only structure in \eqref{eq:SDP_T}. These two differences ensure that our SLRA problem in \eqref{eq:SDP} can  exploit the CA property and characterize the CA atomic norm. As an interesting consequence, to be empirically shown in \Cref{sec:Sim}, a number $K\ge N$ of frequencies can be estimated in CAFE by using the SLRA problem in \eqref{eq:SDP}, which is impossible by using that in \eqref{eq:SDP_T}.

To explore how the aforementioned two differences influence the performance, we consider the following intermediate formulation between \eqref{eq:SDP} and \eqref{eq:SDP_T}:
\equ{
	\begin{split}
		\min_{\m{t}\in \bC^{2N-1},w\in \bC} \frac{1}{2N} \tr\sbra{\cT\m{t}} + \frac{1}{2}w,  \st \begin{bmatrix} w & \m{X}_{:,l}^H \\ \m{X}_{:,l} & \cT\m{t}  \end{bmatrix} \succeq \m{0}, \; l=1,\ldots,L,
	\end{split} \label{eq:SDP_T_var}
}
in which we have constructed the block matrices separately for each channel but used the Toeplitz-only rather than the Hankel-Toeplitz structure. One might expect that the shared Toeplitz matrix $\cT\m{t}$ in \eqref{eq:SDP_T_var} could capture the amplitudes and frequencies information and fully exploit the CA property, as in \eqref{eq:SDP}. But it will shown via numerical results in \Cref{sec:Sim} that, unlike \eqref{eq:SDP}, the formulation in \eqref{eq:SDP_T_var} can only use the CA property to some extent. This implies that the first difference between \eqref{eq:SDP} and \eqref{eq:SDP_T} only enables us to partially exploit the CA property, and the Hankel-Toeplitz matrices in the second difference are essential for us to make full use of it.

\subsection{Duality} \label{sec:dual}
For a matrix $\m{V}\in \bC^{N\times L}$, the dual norm of the CA atomic norm $\norm{\cdot}_{\cA}$ is given by:
\equ{
	\begin{split}
		\norm{\m{V}}_{\cA}^* 
		& = \sup_{\norm{\m{X}}_{\cA}\le 1} \langle \m{V}, \m{X} \rangle_{\bR} = \sup_{\m{a}\sbra{f}\m{\psi}\in \cA} \langle \m{V}, \m{a}\sbra{f}\m{\psi} \rangle_{\bR} \\
		& = \sup_{ f \in \bT, \abs{\psi_l} = 1 } \langle \m{a}\sbra{f}^H\m{V}, \m{\psi}\rangle_{\bR} = \sup_{f \in \bT} \norm{\m{a}\sbra{f}^H\m{V}}_1,
	\end{split} \label{eq:dualnorm}
}
where the last equality holds since the supremum is achieved if $\psi_l$ takes the sign of $\m{a}\sbra{f}^H\m{V}_{:,l},l=1,\ldots,L$. 
The dual problem of \eqref{eq:p1} is then given by:
\equ{
	\max_{\m{V}} \left \langle \m{V}_{\Omega}, \m{X}_{\Omega}^\star \right \rangle_{\bR}, \st \norm{\m{V}}^*_{\cA} \le 1 \; {\rm and} \; \m{V}_{\Omega^c} = \m{0} \label{eq:p1_dual}
}
following from a standard Lagrangian analysis \cite{boyd2004convex}. The dual problem of \eqref{eq:atomic_min2} can be derived similarly.

It is possible to derive a computable characterization for the dual problem in \eqref{eq:p1_dual} by casting the constraint $\norm{\m{V}}^*_{\cA}\le 1$ as linear matrix inequalities (LMIs) using theory of positive trigonometric polynomials; see related derivations in \cite{dumitrescu2007positive,candes2014towards,fernandez2016super}. Unfortunately, it is unclear how to do that in our case due to the inclusion of the $\ell_1$-norm (see \eqref{eq:dualnorm}). Instead, we have derived the SLRA problem in \eqref{eq:p_SDP} for the primal problem in \eqref{eq:p1}. Interestingly, the dual problem of our SLRA problem in \eqref{eq:p_SDP} is given by (see derivations in \cref{append:dual}):
\begin{subequations} \label{eq:dualSDP}
	\begin{align}
		\max_{\m{V},\lbra{\m{U}^l},\lbra{\m{W}^{l,1}},\lbra{\m{W}^{l,2}}}  & \left \langle \m{V}_{ \Omega}, \m{X}^{\star}_{\Omega} \right \rangle_{\bR}, \nonumber \\
		\st & \begin{bmatrix} \m{W}^{l,1} & \sbra{\m{U}^l}^H \\ \m{U}^l & \m{W}^{l,2} \end{bmatrix} \succeq \m{0},  \; \cT^H\lbra{ \sum^L_{l=1}\sbra{\overline{\m{W}^{l,1}}+\m{W}^{l,2}} } =  \m{\xi},  \label{eq:dp2} \\
		&\m{V}_{:,l} = -2 \sbra{\cH^H\m{U}^l}_{\lbra{1,\ldots,N}},  \; \sbra{\cH^H\m{U}^l}_{\lbra{N,\ldots,2n-1}} = \m{0}, \ l = 1,\ldots, L,  \label{eq:dp4} \\
		&\m{V}_{\Omega^c} = \m{0}, 
	\end{align} 	
\end{subequations}where $\m{V}\in \bC^{N\times L}$, $\m{U}^l,\m{W}^{l,1},\m{W}^{l,2} \in \bC^{n\times n}$,
and $\m{\xi} \in \bR^{2n-1}$ has zero entries except $\xi_n = 1$.
By comparing \eqref{eq:dualSDP} and \eqref{eq:p1_dual}, we conjecture that the constraint $\norm{\m{V}}^*_{\cA}\le 1$ can be cast (up to sum-of-squares relaxations) as the four constraints in \eqref{eq:dp2} and \eqref{eq:dp4}.

The problem in \eqref{eq:dualSDP} is a SDP. Once \eqref{eq:dualSDP} is solved and the dual solution $\m{V}^*$ is obtained, we can evaluate the vector dual polynomial $Q\sbra{f} = \m{a}\sbra{f}^H\m{V}^* $ and localize the frequencies that satisfy $\norm{Q\sbra{f}}_1=1$ according to \cref{prop:ULA}. This provides another approach to frequency retrieval besides the Vandermonde decomposition method given the primal solution.

\subsection{ADMM Algorithm and Computational Complexity} \label{sec:ADMM}
The existing off-the-shelf SDP solvers, e.g., SDPT3 \cite{toh2012implementation}, are based on the interior point method (IPM), which has high computational complexity. We present a reasonably fast algorithm based on ADMM \cite{boyd2010distributed}. Taking the SLRA problem in \eqref{eq:p_SDP_completion} for example. To apply the ADMM, we introduce multiple auxiliary Hermitian matrix variables $\lbra{\m{Q}^l}_{l=1}^L$ and write \eqref{eq:p_SDP_completion} as:
\equ{ 
	\begin{split}
		& \min_{\m{t},\m{Z},\lbra{\m{Q}^l \succeq \m{0}}_{l=1}^{L}} \frac{1}{2}\frobn{\m{Y}_{\Omega}-\m{Z}_{\Omega}}^2 +  \frac{\tau}{n} \tr\sbra{\cT\m{t}}, \\
		& \st \m{Q}^l = \begin{bmatrix} \cT\overline{\m{t}} & \cH\overline{\m{Z}}_{:,l} \\ \cH\m{Z}_{:,l} & \cT\m{t} \end{bmatrix}, \; l = 1,\ldots, L. 
	\end{split}
	\label{eq:sdp2_convex}
}
The augmented Lagrangian function is then given by:
\equ{
	\begin{split}
		& \cL \sbra{\m{Z},\m{t},\lbra{\m{Q}^l},\lbra{\m{\Lambda}^l} } \\
		& =  \frac{1}{2}\frobn{\m{Y}_{\Omega}-\m{Z}_{ \Omega}}^2 + \tau t_n  + \sum^L_{l=1} \inp{ \m{\Lambda}^l,\m{Q}^l - \begin{bmatrix} \cT\overline{\m{t}} & \cH\overline{\m{Z}}_{:,l} \\ \cH\m{Z}_{:,l} & \cT\m{t} \end{bmatrix} }_{\bR} \\
		& \quad + \frac{\rho}{2} \sum^L_{l=1} \frobn{\m{Q}^l - \begin{bmatrix} \cT\overline{\m{t}} & \cH\overline{\m{Z}}_{:,l} \\ \cH\m{Z}_{:,l} & \cT\m{t} \end{bmatrix} }^2, \\ \nonumber
		& =  \frac{1}{2}\frobn{\m{Y}_{\Omega}-\m{Z}_{ \Omega}}^2 + \tau t_n + \frac{\rho}{2} \sum^L_{l=1} \frobn{\m{Q}^l - \begin{bmatrix} \cT\overline{\m{t}} & \cH\overline{\m{Z}}_{:,l} \\ \cH\m{Z}_{:,l} & \cT\m{t} \end{bmatrix} + \rho^{-1} \m{\Lambda}^l }^2 - \frac{1}{2\rho} \sum^L_{l=1} \frobn{ \m{\Lambda}^l}^2, \nonumber
	\end{split}
}
where $\rho>0$ is a penalty parameter and $\lbra{\m{\Lambda}^l}^L_{l=1}$ is multiple Hermitian Lagrangian multipliers. 

Assume that at iteration $m$ we have computed $\m{Z}^m$, $\m{t}^m$, and $\m{\Lambda}^{m,l}$ for $l=1,\ldots,L$, the $(m+1)$-th iteration of ADMM is given by:
{
	\lentwo\equa{
		\lbra{\m{Q}^{m+1,l}}  &=&  \argmin_{\m{Q}^l \succeq \m{0}} \cL \sbra{\m{Z}^m,\m{t}^m,\lbra{\m{Q}^l},\lbra{\m{\Lambda}^{m,l}} }, \label{eq:Z} \\
		\sbra{\m{Z}^{m+1},\m{t}^{m+1}}  &=&  \argmin_{\m{Z},\m{t}} \cL \sbra{\m{Z},\m{t},\lbra{\m{Q}^{m+1,l}},\lbra{\m{\Lambda}^{m,l}} }, \label{eq:Xt} \\
		\m{\Lambda}^{m+1,l} &=& \m{\Lambda}^{m,l} + \rho \sbra{\m{Q}^{m+1,l} - \begin{bmatrix} \cT\overline{\m{t}^{m+1}} & \cH\overline{\m{Z}^{m+1}_{:,l}} \\ \cH\m{Z}^{m+1}_{:,l} & \cT\m{t}^{m+1} \end{bmatrix} }, \, l=1,\ldots,L. \label{eq:Lambda}
	}
}

For the first subproblem in \eqref{eq:Z}, we have the updates:
\equ{
	\m{Q}^{m+1,l} = \cP\sbra{ \begin{bmatrix} \cT\overline{\m{t}^{m}} & \cH\overline{\m{Z}^{m}_{:,l}} \\ \cH\m{Z}^{m}_{:,l} & \cT\m{t}^{m} \end{bmatrix} - {\rho^{-1}}\m{\Lambda}^{m,l}}, \quad l = 1,\ldots,L, \label{eq:Z_HT}
}
where $\cP$ denotes the orthogonal projection of a Hermitian matrix onto the PSD cone by forming the eigen-decomposition and setting all but the positive eigenvalues to zero \cite{dax2014low}.

For the second subproblem in \eqref{eq:Xt}, the variables $\m{Z}$ and $\m{t}$ can be separately solved for in closed form. Denote a Hermitian matrix $\m{P}^l=\m{Q}^{m+1,l}+\rho^{-1}\m{\Lambda}^{m,l}$ and write $\m{P}^l = \begin{bmatrix} \m{P}^{l,1} & \sbra{\m{P}^{l,3}}^H \\ \m{P}^{l,3} & \m{P}^{l,2} \end{bmatrix}$ as a block matrix like the Hankel-Toeplitz matrix. For $1\le j \le N$, we have
\equ{
	z_{j,l}^{m+1}  = \sbra{\omega_j + 2\rho d_j}^{-1} \mbra{y_{j,l} + 2\rho \sbra{\cH^H\m{P}^{l,3}}_j  }, \label{eq:x_HT}
}
and for $N < j \le 2n-1 $,
\equ{
	z_{j,l}^{m+1}  = d_j^{-1} \sbra{ \cH^H\m{P}^{l,3} }_j, \label{eq:x_HT2}
}
for each $l=1,\ldots,L$, where $\m{d} = \mbra{1,2,\dots,n,n-1,\dots,1}$ and $\omega_j = 1$ if $j\in \Omega$ or $0$ otherwise.
The update for $\m{t}$ is given by:
\equ{
	\m{t}^{m+1}  = \frac{1}{2L} \sbra{\diag\sbra{\m{d}}}^{-1} \sbra{ \cT^H \lbra{\sum^L_{l=1} \sbra{ \overline{\m{P}^{l,1}} + \m{P}^{l,2}} } - \frac{\tau}{\rho} \m{\xi}  }. \label{eq:t_HT}
}
According to \cite{boyd2010distributed}, the ADMM algorithm converges to the optimal solution of the convex optimization problem in \eqref{eq:p_SDP_completion}. The ADMM implementation of \eqref{eq:p_SDP} is similar to that of \eqref{eq:p_SDP_completion} and hence is omitted.

\begin{algorithm}
	\caption{SLRA-based ANM for CAFE (SACA) using the ADMM algorithm}
	\label{alg:SACA}
	\begin{algorithmic}
		\STATE{Initialize $\m{Z}^1_{\Omega} = \m{Y}_{\Omega}$, $\m{t}^1 = \m{0}$, and $\lbra{\m{\Lambda}^{1,l} = \m{0}}$}
		\WHILE{not converged}
		\STATE{Update $\lbra{\m{Q}^l}$ using \eqref{eq:Z_HT}}
		\STATE{Update $\sbra{\m{Z},\m{t}}$ using \eqref{eq:x_HT}, \eqref{eq:x_HT2}, and \eqref{eq:t_HT}}
		\STATE{Update $\lbra{\m{\Lambda}^l}$ using \eqref{eq:Lambda}}
		\ENDWHILE
		\STATE{Compute the Vandermonde decomposition of $\cT\m{t}$ in \eqref{eq:T2} using root-MUSIC \cite{barabell1983improving}}
		\RETURN Solution $(\m{f}^*,\m{b}^*)$ as estimate of $(\m{f},\m{b})$
	\end{algorithmic}
\end{algorithm}

We summarize the proposed SACA approach using the ADMM algorithm in \Cref{alg:SACA} and analyze its complexity. Taking the SLRA problem in \eqref{eq:p_SDP_completion} for example. It has $d = \mathcal{O}\sbra{N}$ free variables and $L$ LMIs, and the $l$-th LMI has size of $k_l\times k_l$ with $k_l = \mathcal{O}\sbra{N}$. It follows from \cite{ben2001lectures} that the IPM for \eqref{eq:p_SDP_completion} has computational complexity on the order of
$
\sbra{1+\sum^L_{l=1}k_l}^{\frac{1}{2}}  d \sbra{  d^2 + d \sum^L_{l=1}k^2_l + \sum^L_{l=1}k_l^3  } = \mathcal{O} \sbra{L^{1.5}N^{4.5}}.
$
In contrast to this, the ADMM algorithm has a per-iteration complexity of $\mathcal{O} \sbra{L N^3}$ that is dominated by the eigen-decompositions for updating $\lbra{\m{Q}^l}$ in \eqref{eq:Z_HT}. This complexity can be further reduced to $\mathcal{O} \sbra{N^3 + L N^2}$ by updating $\lbra{\m{Q}^l}$ in parallel where $LN^2$ arises from the update of $\m{t}$ in \eqref{eq:t_HT}.

\section{Theoretical Guarantees} \label{sec:theory}
\subsection{Dual Certificate}
The following proposition provides a dual certificate for validating optimality of a solution to the problem in \eqref{eq:p1}. Its proof is similar to \cite[Prop II.4]{tang2013compressed} \cite{yang2016exact} and will be omitted.
\begin{proposition} \label{prop:ULA}
	In the full data case, $\m{X}^\star =\sum_{k=1}^K b_k \m{a}(f_k) e^{i\m{\phi}_k}$ with $\m{\phi}_k = \mbra{\phi_{k,1},\ldots,\phi_{k,L}}$ is the unique atomic decomposition satisfying that $\norm{\m{X}^\star}_{\cA} = \sum_{k=1}^K b_k$ if there exists a vector-valued dual polynomial $Q:\bT\to \bC^{1\times L}$,
	\equ{
		Q\sbra{f} = \m{a}\sbra{f}^H\m{V} \label{eq:Q_fulldata}
	} 
	satisfying that
	{
		\lentwo\equa{
			Q\sbra{f_k} &=& \frac{1}{L}e^{i\m{\phi}_k}, \quad f_k \in \Upsilon, \label{eq:condtion1} \\
			\norm{Q\sbra{f}}_1 &<& 1, \quad f \in \bT \setminus \Upsilon, \label{eq:condtion2} 
		}
	}where $\m{V}$ is an $N\times L$ matrix and $ \Upsilon \subset \bT$ denotes the frequency set of $\m{X}^\star$. In the missing data case, $\m{X}^\star=\sum_{k=1}^K b_k \m{a}(f_k) e^{i\m{\phi}_k}$ is the unique optimizer of \eqref{eq:p1} if $\lbra{\m{a}_{\Omega}\sbra{f_k}}_{f_k\in \Upsilon} $ are linearly independent, where $\m{a}_{\Omega}\sbra{\cdot}$ is a subvector of $\m{a}\sbra{\cdot}$ indexed by $\Omega$, and there exists $Q\sbra{f}$ in \eqref{eq:Q_fulldata} satisfying \eqref{eq:condtion1}, \eqref{eq:condtion2} and the additional constraint that $\m{V}_j = \m{0}, \; j\not \in \Omega. \label{eq:V_missing}$
\end{proposition}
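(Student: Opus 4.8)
The plan is to prove \cref{prop:ULA} by the standard dual-certificate (Lagrangian duality) argument for atomic norm minimization, following \cite[Prop.~II.4]{tang2013compressed} and \cite{yang2016exact}, with the one twist that the dual of the CA atomic norm is the $\ell_1$-based quantity in \eqref{eq:dualnorm} rather than an $\ell_2$-based one. The vector dual polynomial $Q(f)=\m a(f)^H\m V$ of \eqref{eq:Q_fulldata} plays the role of a dual-feasible certificate $\m V$ at which complementary slackness is tight for the claimed decomposition $\m X^\star=\sum_{k=1}^K b_k\,\m a(f_k)e^{i\m\phi_k}$.

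First I would check that $\m V$ is dual feasible. By \eqref{eq:dualnorm}, $\norm{\m V}_{\cA}^*=\sup_{f\in\bT}\norm{Q(f)}_1$; condition \eqref{eq:condtion2} bounds the supremum over $\bT\setminus\Upsilon$ by $1$, while at each $f_k\in\Upsilon$ condition \eqref{eq:condtion1} gives $\norm{Q(f_k)}_1=\tfrac1L\norm{e^{i\m\phi_k}}_1=1$, so $\norm{\m V}_{\cA}^*\le1$; in the missing-data case the extra requirement $\m V_j=\m0$ for $j\notin\Omega$ makes $\m V$ feasible for the dual \eqref{eq:p1_dual}. Then I would close the weak-duality sandwich. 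On one side, $\langle\m V,\m X^\star\rangle_{\bR}\le\norm{\m V}_{\cA}^*\norm{\m X^\star}_{\cA}\le\norm{\m X^\star}_{\cA}\le\sum_k b_k$, the last step because the claimed expression is itself an atomic decomposition. On the other side, using \eqref{eq:condtion1} and the adjoint identity used already in \eqref{eq:dualnorm}, $\langle\m V,\m a(f_k)e^{i\m\phi_k}\rangle_{\bR}=\langle Q(f_k),e^{i\m\phi_k}\rangle_{\bR}=\tfrac1L\norm{e^{i\m\phi_k}}_2^2=1$, whence $\langle\m V,\m X^\star\rangle_{\bR}=\sum_k b_k$. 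Hence every inequality is an equality: $\norm{\m X^\star}_{\cA}=\sum_k b_k$, which is the full-data claim; in the missing-data case, since $\m V_{\Omega^c}=\m0$ makes $\langle\m V,\cdot\rangle_{\bR}$ constant on the feasible affine set $\lbra{\m X:\m X_\Omega=\m X^\star_\Omega}$, the same chain shows $\m X^\star$ attains the optimal value of \eqref{eq:p1}.

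For uniqueness, let $\m X^\star=\sum_j c_j\,\m a(g_j)\m\psi_j$ be any atomic decomposition with $c_j>0$, $\abs{\psi_{j,l}}=1$, and $\sum_j c_j=\norm{\m X^\star}_{\cA}$ (in the missing-data case, any optimizer of \eqref{eq:p1} has such a norm-attaining decomposition). Then $\sum_j c_j=\langle\m V,\m X^\star\rangle_{\bR}=\sum_j c_j\langle Q(g_j),\m\psi_j\rangle_{\bR}\le\sum_j c_j\norm{Q(g_j)}_1\le\sum_j c_j$, so $\langle Q(g_j),\m\psi_j\rangle_{\bR}=\norm{Q(g_j)}_1=1$ for every $j$. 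The first equality together with \eqref{eq:condtion2} forces each $g_j\in\Upsilon$; the second, via the equality case of the $\ell_1$--$\ell_\infty$ H\"older inequality together with the fact that every one of the $L$ entries of $Q(f_k)=\tfrac1Le^{i\m\phi_k}$ has modulus $\tfrac1L>0$, forces $\m\psi_j=e^{i\m\phi_k}$ whenever $g_j=f_k$. Grouping the terms sharing a common node, the competing decomposition becomes $\sum_k\tilde b_k\,\m a(f_k)e^{i\m\phi_k}$ with $\tilde b_k\ge0$; comparing it column-by-column with $\sum_k b_k\,\m a(f_k)e^{i\m\phi_k}$ and restricting to the rows in $\Omega$ yields $\tilde b_k=b_k>0$ by linear independence of $\lbra{\m a_\Omega(f_k)}$ --- the hypothesis stated in the missing-data case, and automatic from distinctness of the $f_k$ (when $K\le N$) in the full-data case. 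I expect this uniqueness step to be the only delicate point: one must extract from the tight H\"older equality not just that the competing frequencies lie in $\Upsilon$ but that the full phase vectors agree, which is precisely where the CA structure enters --- all $L$ entries of $Q(f_k)$ share the common modulus $1/L$, so the equality case pins down each entry of $\m\psi_j$ instead of leaving it free; the dual-feasibility and weak-duality parts, and the bookkeeping around $\m V_{\Omega^c}=\m0$, are routine given \eqref{eq:dualnorm}.
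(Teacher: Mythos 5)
Your proposal is correct and follows exactly the route the paper intends: it omits the proof as "similar to \cite[Prop.~II.4]{tang2013compressed}, \cite{yang2016exact}", i.e., the standard dual-certificate/weak-duality argument, with the only CA-specific twist being the $\ell_1$--$\ell_\infty$ H\"older equality case that pins down the unit-modulus phase vectors, which you handle correctly. Your parenthetical caveat that the full-data coefficient-uniqueness step uses $K\le N$ (Vandermonde independence) is consistent with the regime in which the proposition is invoked (\cref{thm:ULA} under the minimum separation), so no gap remains.
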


\subsection{Advantage of Using the CA Property}
We study the advantage of using the CA property by relating SACA to the previous ANM in \cite{yang2016exact,yang2018sample,li2015off,fernandez2016super,steffens2018compact,li2018atomic} that does not consider the CA property. Take the full data case as an example. If we apply the previous ANM to the CAFE problem concerned in the present paper, according to \cite{yang2016exact,li2015off}, the frequencies can be exactly recovered if there exists a dual certificate $\breve{Q}\sbra{f} = \m{a}\sbra{f}^H\m{V}$ satisfying that
{
	\lentwo\equa{
		\breve{Q}\sbra{f_k} &=& \frac{1}{\sqrt{L}} e^{i\m{\phi}_k} , \quad f_k \in \Upsilon, \label{eq:Qb1} \\
		\norm{\breve{Q}\sbra{f}}_2 &<& 1, \quad f \in \bT \setminus \Upsilon. \label{eq:Qb2}
	}
} 
Then, we have the following result.
\begin{lemma} \label{lem:weaker}
	Suppose that $\breve{Q}\sbra{f}$ is a dual certificate for ANM that satisfies \eqref{eq:Qb1} and \eqref{eq:Qb2}. Then, $Q\sbra{f} = \breve{Q}\sbra{f}/\sqrt{L}$ is a dual certificate for SACA that satisfies \eqref{eq:condtion1} and \eqref{eq:condtion2}.
\end{lemma}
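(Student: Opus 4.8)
The plan is to verify directly that the rescaled polynomial $Q\sbra{f}=\breve{Q}\sbra{f}/\sqrt{L}$ meets the two requirements \eqref{eq:condtion1}--\eqref{eq:condtion2} of \Cref{prop:ULA}, using only the corresponding properties \eqref{eq:Qb1}--\eqref{eq:Qb2} of $\breve{Q}$ together with the elementary norm comparison between $\onen{\cdot}$ and $\twon{\cdot}$ on $\bC^{1\times L}$.

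First I would note that the dual-certificate template is preserved: if $\breve{Q}\sbra{f}=\m{a}\sbra{f}^H\m{V}$, then $Q\sbra{f}=\m{a}\sbra{f}^H\sbra{\m{V}/\sqrt{L}}$ is again of the form \eqref{eq:Q_fulldata}, and in the missing-data case the support constraint $\m{V}_j=\m{0}$, $j\notin\Omega$, survives the rescaling. Evaluating at $f_k\in\Upsilon$ and plugging in \eqref{eq:Qb1} gives $Q\sbra{f_k}=\frac{1}{\sqrt{L}}\cdot\frac{1}{\sqrt{L}}e^{i\m{\phi}_k}=\frac{1}{L}e^{i\m{\phi}_k}$, which is exactly \eqref{eq:condtion1}.

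Next I would establish the strict inequality \eqref{eq:condtion2}. For $f\in\bT\setminus\Upsilon$, Cauchy--Schwarz gives $\onen{\m{v}}\le\sqrt{L}\,\twon{\m{v}}$ for every $\m{v}\in\bC^{1\times L}$, hence
\equ{
	\norm{Q\sbra{f}}_1=\frac{1}{\sqrt{L}}\norm{\breve{Q}\sbra{f}}_1\le\frac{1}{\sqrt{L}}\cdot\sqrt{L}\,\norm{\breve{Q}\sbra{f}}_2=\norm{\breve{Q}\sbra{f}}_2<1,
}
the last step being \eqref{eq:Qb2}. Together with the interpolation property, this shows that $Q$ satisfies the hypotheses of \Cref{prop:ULA}, completing the verification.

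I do not expect a genuine obstacle: the argument is a one-line consequence of the bound $\onen{\cdot}\le\sqrt{L}\twon{\cdot}$ and the matched normalizations in the two sets of conditions. The only point worth emphasizing is that the implication runs one way only --- an ANM certificate always produces a SACA certificate, not conversely --- so the SACA recovery condition is never harder, and typically strictly easier, to satisfy than the ANM one, which is precisely the quantitative sense in which exploiting the CA property helps.
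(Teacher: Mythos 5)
Your proposal is correct and follows essentially the same route as the paper: the interpolation condition \eqref{eq:condtion1} follows immediately from the rescaling together with \eqref{eq:Qb1}, and the strict bound \eqref{eq:condtion2} follows from the norm comparison $\onen{\m{v}}\le\sqrt{L}\,\twon{\m{v}}$ combined with \eqref{eq:Qb2}, which is exactly the one-line argument given in the paper. The additional remarks on preservation of the form \eqref{eq:Q_fulldata} and of the support constraint in the missing-data case are fine but not needed beyond what the paper states.
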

\begin{proof}
	Since $Q\sbra{f} = \breve{Q}\sbra{f}/\sqrt{L}$, \eqref{eq:condtion1} is a direct consequence of \eqref{eq:Qb1}. Moreover, the inequality in \eqref{eq:condtion2} holds since
	$
	\norm{Q\sbra{f}}_1 \le \sqrt{L} \twon{Q\sbra{f}} = \norm{\breve{Q}\sbra{f}}_2 < 1,
	$
	completing the proof.
\end{proof}

We show by \cref{lem:weaker} that the conditions required for the dual certificate of SACA are weaker than those for ANM without using the CA property, implying the advantage of using the CA property. 

\subsection{Exact Recovery in the Noiseless Case} \label{sec:noiseless}
Based on \cref{lem:weaker}, we can translate theoretical results for ANM to SACA. The following theorem considers the full data case and is a result of combining \cref{lem:weaker} and \cite[Theorem 4]{yang2016exact}.

\begin{theorem} \label{thm:ULA}
	If $N\ge 257$ and the frequency support $\Upsilon = \lbra{f_k} \subset \bT$ satisfies the minimum separation condition $\Delta_{f} \triangleq \min_{p\neq q} \abs{ f_{p} - f_{q} } > \frac{1}{\left \lfloor (N-1)/4 \right \rfloor}, \label{eq:dist}$
	where the distance is wrapped around on the unit circle. Then $\m{X}^\star = \sum_{k=1}^K b_k \m{a}(f_k) e^{i\m{\phi}_k}$ is the unique atomic decomposition satisfying that $\norm{\m{X}^\star}_{\cA} = \sum_{k=1}^K b_k$.
\end{theorem}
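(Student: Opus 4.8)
The plan is to deduce \Cref{thm:ULA} directly from \Cref{lem:weaker} together with the dual-certificate characterization in \Cref{prop:ULA}, by importing the existing dual-certificate construction for general multichannel ANM from \cite[Theorem 4]{yang2016exact}. First I would recall that \cite[Theorem 4]{yang2016exact} guarantees, under exactly the stated hypotheses $N\ge 257$ and $\Delta_f > 1/\lfloor (N-1)/4\rfloor$, the existence of a vector-valued dual polynomial $\breve{Q}(f)=\m{a}(f)^H\m{V}$ satisfying the interpolation conditions \eqref{eq:Qb1} and the strict boundedness \eqref{eq:Qb2}; this certifies that $\m{X}^\star$ is the unique atomic decomposition for the \emph{non-CA} atomic norm. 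The key observation is that the support $\Upsilon$, the frequencies $\{f_k\}$, the amplitudes $\{b_k\}$, and the phases $\{e^{i\m\phi_k}\}$ of $\m{X}^\star$ are identical whether we view it through $\cA$ or through the non-CA atom set, so the same data feeds both certificate problems.

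Next I would invoke \Cref{lem:weaker}: the rescaled polynomial $Q(f)=\breve{Q}(f)/\sqrt{L}$, which is still of the form $\m{a}(f)^H(\m{V}/\sqrt{L})$, satisfies \eqref{eq:condtion1} and \eqref{eq:condtion2}. Since we are in the full data case, the hypotheses of the first half of \Cref{prop:ULA} are exactly met, so $\m{X}^\star=\sum_{k=1}^K b_k\m{a}(f_k)e^{i\m\phi_k}$ is the unique atomic decomposition achieving $\norm{\m{X}^\star}_{\cA}=\sum_{k=1}^K b_k$. This is precisely the assertion of \Cref{thm:ULA}. I would also note explicitly that the minimum-separation condition here is stated with $\lfloor (N-1)/4\rfloor$ rather than the $4/(N-1)$-type bound familiar from the single-channel case because \cite[Theorem 4]{yang2016exact} is a discrete (finite-$N$) result; I would quote it verbatim to keep the chain of implications transparent.

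The only genuinely nontrivial point — and the one I would flag as the main obstacle — is verifying that the certificate of \cite[Theorem 4]{yang2016exact} really does transfer without any hidden assumption on the phase vectors. In the missing-data ANM theory the phases $e^{i\m\phi_k}$ are typically taken random (uniform on the sphere), but the full-data Theorem 4 of \cite{yang2016exact} should hold for \emph{arbitrary} phase vectors, so no probabilistic caveat is inherited; I would confirm this reading and state it. A secondary bookkeeping issue is the normalization constant: \cite{yang2016exact} uses the $\frac{1}{2\sqrt N}(\tr(\cT\m t)+\tr(\m W))$ objective and correspondingly the interpolation value $\frac{1}{\sqrt L}e^{i\m\phi_k}$ in \eqref{eq:Qb1}, whereas \Cref{prop:ULA} wants $\frac{1}{L}e^{i\m\phi_k}$; the factor $1/\sqrt L$ in \Cref{lem:weaker} reconciles these, and the chain $\norm{Q(f)}_1\le\sqrt L\,\twon{Q(f)}=\norm{\breve Q(f)}_2<1$ absorbs the loss from passing between the $\ell_2$ and $\ell_1$ norms. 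Once these two checks are in place the proof is a one-line composition of the three cited ingredients, so I would keep the write-up short, essentially: apply \cite[Theorem 4]{yang2016exact}, apply \Cref{lem:weaker}, apply \Cref{prop:ULA}.
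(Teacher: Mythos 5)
Your proposal is correct and follows essentially the same route as the paper, which states \Cref{thm:ULA} as a direct consequence of combining \Cref{lem:weaker} with the full-data dual certificate of \cite[Theorem 4]{yang2016exact} (whose construction indeed requires no assumption on the phases), closed via \Cref{prop:ULA}. Your normalization check that $Q(f_k)=\breve{Q}(f_k)/\sqrt{L}=\frac{1}{L}e^{i\m{\phi}_k}$ matches \eqref{eq:condtion1} is exactly the bookkeeping the paper relies on.
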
 

In the missing data case, we expect that theoretical guarantees can be derived for SACA by combining \cref{lem:weaker} and existing results for ANM (see, e.g., \cite[Theorem 1]{yang2018sample}). Differently from \cite[Theorem 4]{yang2016exact} in which no assumptions are made for the phases, the phases in \cite[Theorem 1]{yang2018sample} are assumed to lie uniformly on the unit hypersphere, which however cannot be satisfied for the CA signals concerned in the present paper. To resolve this problem, we show the following lemma that is in parallel with \cite[Lemma 4]{yang2018sample}, to be specific, the two lemmas contain the same conclusion but different assumptions on the phase matrix $\m{\Phi}$.

\begin{lemma} \label{lem:1}
	Let $\m{0}\neq \m{w}\in \bC^K$ and $\m{\Phi}\in \bC^{K\times L}$ with $\Phi_{k,l}=e^{i\phi_{k,l}}$. Assume that $\lbra{\phi_{k,l}}$ are sampled i.i.d. from the uniform distribution on $\mbra{0,2\pi}$. Then, for all $u\ge \norm{\m{w}}_2 $ and a constant $c$, we have
	$
	\bP\lbra{ \twon{\frac{1}{\sqrt{L}} \m{\Phi}^H\m{w}} \ge u } \le  e^{-c L\sbra{\frac{u}{\twon{\m{w}}}-1}^2}. $
\end{lemma}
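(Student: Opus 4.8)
The plan is a Cram\'er--Chernoff argument applied to an average of $L$ i.i.d.\ random variables. First I would observe that, since the columns of $\m{\Phi}$ are independent, the entries $\xi_l \triangleq \sbra{\m{\Phi}^H\m{w}}_l = \sum_{k=1}^K w_k e^{-i\phi_{k,l}}$ are i.i.d.\ over $l=1,\ldots,L$, the quantity of interest equals $\twon{\frac{1}{\sqrt L}\m{\Phi}^H\m{w}}^2 = \frac{1}{L}\sum_{l=1}^L \abs{\xi_l}^2$, and $\bE\abs{\xi_l}^2 = \twon{\m{w}}^2$ because the cross terms vanish ($\bE e^{i\phi}=0$ for $\phi$ uniform on $\mbra{0,2\pi}$). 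So it suffices to control the exponential moments of $\abs{\xi_l}^2$ and then invoke the Chernoff bound on the sum over $l$; this is exactly what produces the factor $L$ in the exponent.

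The crucial estimate to establish is that $\xi_l$ is sub-Gaussian with variance proxy $O\!\sbra{\twon{\m{w}}^2}$. I would get this by writing $\xi_l$ as a sum of independent, mean-zero complex summands $w_k e^{-i\phi_{k,l}}$ of modulus $\abs{w_k}$ and applying Hoeffding's inequality separately to $\mathrm{Re}\,\xi_l$ and $\mathrm{Im}\,\xi_l$ (each a sum of independent terms bounded by $\abs{w_k}$, with $\sum_k\abs{w_k}^2 = \twon{\m{w}}^2$), then a union bound, giving $\bP\lbra{\abs{\xi_l}\ge t} \le 4\,e^{-t^2/(4\twon{\m{w}}^2)}$. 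Consequently $\eta_l \triangleq \abs{\xi_l}^2/\twon{\m{w}}^2$ is a sub-exponential random variable with $O(1)$ sub-exponential norm and mean $1$, so by the standard equivalence of sub-exponential characterizations there is a universal constant $C$ with $\bE e^{\theta(\eta_l-1)} \le e^{C\theta^2}$ for all $\abs{\theta}\le 1/C$. This is the step where the hypothesis on the distribution of $\m{\Phi}$ actually enters; it replaces the uniform-on-the-hypersphere computation used in the analogous \cite[Lemma 4]{yang2018sample}.

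Then, writing $a \triangleq u/\twon{\m{w}} \ge 1$, the event is $\lbra{\frac{1}{L}\sum_l \eta_l \ge a^2}$, and by independence together with the MGF bound, for every $0<\theta\le 1/C$ one has $\bP\lbra{\frac{1}{L}\sum_l \eta_l \ge a^2} \le \exp\!\sbra{-L\!\sbra{\theta\sbra{a^2-1}-C\theta^2}}$. Choosing $\theta = \min\lbra{\sbra{a^2-1}/(2C),\,1/C}$ bounds the exponent by $-c\,L\min\lbra{\sbra{a^2-1}^2,\,a^2-1}$ for a universal $c$, and since $\sbra{a^2-1}^2 = \sbra{a-1}^2\sbra{a+1}^2 \ge 4\sbra{a-1}^2$ and $a^2-1 = \sbra{a-1}\sbra{a+1} \ge \sbra{a-1}^2$ when $a\ge1$, we get $\min\lbra{\sbra{a^2-1}^2,\,a^2-1}\ge\sbra{a-1}^2$, hence $\bP\lbra{\twon{\frac{1}{\sqrt L}\m{\Phi}^H\m{w}}\ge u} \le e^{-cL(u/\twon{\m{w}}-1)^2}$. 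I would emphasize that using the Cram\'er bound directly, rather than the looser Bernstein inequality, is what lets us dispense with a spurious leading constant.

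The hard part is the sub-Gaussian tail for the Steinhaus-type sum $\xi_l=\sum_k w_k e^{-i\phi_{k,l}}$ in the second paragraph; everything afterwards is routine Chernoff optimization and elementary algebra. Two points require care: the Hoeffding variance proxy must be kept proportional to $\twon{\m{w}}^2$ (not $\onen{\m{w}}^2$), which is precisely what makes the final bound free of $K$, and the restriction $\abs{\theta}\le 1/C$ must be respected in the optimization over $\theta$.
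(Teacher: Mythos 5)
Your proposal is correct and follows essentially the same route as the paper: the paper notes that the entries of $\m{\Phi}^H\m{w}$ are independent sub-Gaussian with norm proportional to $\twon{\m{w}}$ (since the Steinhaus entries $e^{i\phi_{k,l}}$ are bounded) and that $\bE\abs{\m{\Phi}_{:,l}^H\m{w}}^2=\twon{\m{w}}^2$, and then directly invokes the norm-concentration theorem for vectors with independent sub-Gaussian coordinates \cite[Theorem 3.1.1]{vershynin2018high} before substituting $u=\sbra{1+t/\sqrt{L}}\twon{\m{w}}$. Your Hoeffding-plus-Cram\'er--Chernoff argument is simply a self-contained re-derivation of that cited concentration step, so the two proofs coincide in substance.
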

\begin{proof}
	See \cref{append:thm}. 
\end{proof}

Making use of \cref{lem:1}, instead of \cite[Lemma 4]{yang2018sample}, and using the proof techniques of \cite[Theorem 1]{yang2018sample}, we can show the following theorem, of which the detailed proof will be omitted.
\begin{theorem} \label{thm:SLA}
	Given the noiseless observation $\m{X}^\star_{\Omega}$ in the missing data case and the phase matrix $\m{\Phi}$ with $\Phi_{k,l}=e^{i\phi_{k,l}}$. Assume that $\lbra{\phi_{k,l}}$ are sampled i.i.d. from the uniform distribution on $\mbra{0,2\pi}$ and the frequencies $\lbra{f_k}$ satisfy the minimum separation condition. 
	Then, with probability at least $1-\delta$, there exists a numerical constant $C$ such that 
	$
	M \ge C \max \lbra{ \log^2 \frac{N}{\delta}, K \sbra{\log \frac{K}{\delta} }  \sbra{1+\frac{1}{L}\log \frac{N}{\delta}} } \label{eq:M_bound}
	$
	is sufficient to guarantee that $\m{X}^\star = \sum_{k=1}^K b_k \m{a}(f_k) e^{i\m{\phi}_k}$ is the unique optimizer to \eqref{eq:p1}.
\end{theorem}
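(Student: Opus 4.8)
The plan is to verify the dual-certificate conditions of \Cref{prop:ULA} for the missing-data case by adapting the randomized construction of \cite[Theorem 1]{yang2018sample} to the CA setting, with \cref{lem:1} replacing \cite[Lemma 4]{yang2018sample} at every place the phase randomness is used. Two ingredients must be produced: (i) linear independence of $\lbra{\m{a}_{\Omega}(f_k)}_{f_k\in\Upsilon}$, and (ii) a vector dual polynomial $Q(f)=\m{a}(f)^H\m{V}$ with $\m{V}_j=\m{0}$ for $j\notin\Omega$ that satisfies the interpolation equality \eqref{eq:condtion1} on $\Upsilon$ and the strict inequality \eqref{eq:condtion2} off $\Upsilon$. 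For (i), since $\Omega$ is a random $M$-subset of $\lbra{1,\ldots,N}$ and the frequencies obey the minimum-separation condition, a matrix-Chernoff/coherence argument (as in \cite{yang2018sample}) shows that $\m{A}_{\Omega}$ restricted to the support is well-conditioned once $M\gtrsim K\log(K/\delta)$, which accounts for the $K\log(K/\delta)$ factor in the stated bound on $M$.

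For (ii), I would take $Q$ of the form $Q(f)=\sum_{k}\bar{\cK}(f-f_k)\,\m{\alpha}_k+\sum_k\bar{\cK}'(f-f_k)\,\m{\beta}_k$, where $\bar{\cK}$ is a random kernel (of squared-Fej\'er type) built only from the observed indices $\Omega$ and $\m{\alpha}_k,\m{\beta}_k\in\bC^{1\times L}$ are determined by imposing $Q(f_k)=\frac{1}{L}e^{i\m{\phi}_k}$ and $Q'(f_k)=\m{0}$; by construction the coefficient matrix $\m{V}$ is then supported on the rows indexed by $\Omega$. The key step is to show, via matrix Bernstein, that the random interpolation matrix concentrates around its full-data expectation (the one underlying \Cref{thm:ULA} and \cite[Theorem 4]{yang2016exact}), so that it is invertible and the coefficients $\m{\alpha}_k,\m{\beta}_k$ are controlled; this needs $M\gtrsim\log^2(N/\delta)$ and explains that term. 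The phase randomness enters only through $\m{\Phi}^H\m{w}$-type quantities arising when bounding $\twon{Q(f)}$, and it is precisely here that \cref{lem:1} is invoked in place of the uniform-on-sphere concentration of \cite[Lemma 4]{yang2018sample}.

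To finish, I would bound $\twon{Q(f)}$ — hence $\onen{Q(f)}\le\sqrt{L}\,\twon{Q(f)}$, exactly as in the proof of \cref{lem:weaker} — uniformly off $\Upsilon$: establish the strict inequality on a fine grid of size $\mathrm{poly}(N)$ using \cref{lem:1} and a union bound, then extend it to all $f\in\bT\setminus\Upsilon$ via a Bernstein-type Lipschitz estimate for trigonometric polynomials. Because \cref{lem:1} yields the exponent $-cL(u/\twon{\m{w}}-1)^2$, forcing each grid-point failure probability below $\delta/\mathrm{poly}(N)$ costs only a deviation of order $\sqrt{\tfrac{1}{L}\log(N/\delta)}$ beyond the deterministic size, which is exactly the source of the refining factor $1+\tfrac{1}{L}\log(N/\delta)$ multiplying $K\log(K/\delta)$ in \eqref{eq:M_bound}; a union bound over the three high-probability events, each holding with probability at least $1-\delta/3$, then gives the claim. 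I expect the main obstacle to be this last step — carefully locating every use of \cite[Lemma 4]{yang2018sample} in the original argument and checking that substituting the CA concentration bound of \cref{lem:1} leaves the kernel and Bernstein estimates intact while genuinely producing the advertised $1/L$ improvement, rather than a cruder bound that would spoil \eqref{eq:M_bound}.
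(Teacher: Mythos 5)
Your proposal follows essentially the same route as the paper: the paper's (omitted) proof is exactly to rerun the dual-certificate construction of \cite[Theorem 1]{yang2018sample}, substituting \cref{lem:1} for \cite[Lemma 4]{yang2018sample} wherever the phase randomness is used, and converting the resulting $\ell_2$-type certificate to the $\ell_1$-type conditions of \cref{prop:ULA} via the $\norm{\cdot}_1\le\sqrt{L}\norm{\cdot}_2$ argument of \cref{lem:weaker}. Your sketch matches this plan, including the attribution of the $1+\frac{1}{L}\log\frac{N}{\delta}$ factor to the concentration bound in \cref{lem:1}, so no further comparison is needed.
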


\cref{thm:SLA} shows that a resolution inversely proportional to the signal length $N$ and a sample size $M$ scaling with the model order $K$ are sufficient to guarantee exact frequency estimation. 
While the shown resolution and sample complexity for CA signals coincide with those for general multichannel signals, we expect that SACA has better empirical performance than the previous ANM due to the benefit of using the CA property shown by \cref{prop:ULA} and \cref{lem:weaker}. But we also note that the sample complexity in \cref{thm:SLA} cannot be substantially improved in future studies. In particular, note that there are $K(L+2)$ real unknowns including $\lbra{f_k}, \lbra{b_k}$ and $\lbra{\phi_{k,l}}$ in the signal model in \eqref{eq:signal}. The minimum required number of complex-valued samples is thus $K(L+2)/2$, and the number of samples per channel for any method must satisfy $M \ge \frac{1}{2L}K\sbra{L+2} = K \sbra{\frac{1}{2}+\frac{1}{L}}$. The second term of the bound of $M$ in \cref{thm:SLA} is merely logarithmic factors greater than this information theoretic rate.

\subsection{Choosing the Regularization Parameter in the Noisy Case} \label{sec:tau}
Define the set
\equ{
	\cA_{\Omega} \triangleq \lbra{ \m{a}_{\Omega}(f) \m{\psi}: f \in \bT, \m{\psi} \in \bC^{1 \times  L}, \abs{\psi_l} = 1, l = 1,\ldots,L},
}
and the atomic norm $\norm{\cdot}_{\mathcal{A}_{\Omega}}$ associated with it, as in \eqref{eq:atomic1}. Suppose that $\Omega$ is sorted ascendingly and denote the range of the sampling period $\overline{N} = \Omega_M - \Omega_1 + 1 \le N$. It follows from \cite[Theorem 1]{bhaskar2013atomic} that the estimate $\widehat{\m{X}}$ given by the solution in \eqref{eq:atomic_min2} with $\tau \ge \bE \norm{\m{E}_{\Omega}}^*_{\cA_\Omega}$ has the expected (per-element)  mean squared error $\frac{1}{ML} \bE \frobn{\widehat{\m{X}}_{\Omega} - \m{X}^\star_{\Omega}}^2 \le \frac{\tau}{ML} \norm{\m{X}^\star}_{\cA}$, where $\norm{\cdot}_{\mathcal{A}_{\Omega}}^*$ is the dual norm of $\norm{\cdot}_{\mathcal{A}_{\Omega}}$, as in \eqref{eq:dualnorm}. 
To determine $\tau$, we have the following result.
\begin{theorem} \label{thm:noisy}
	If the entries of $\m{E}$ obey i.i.d. Gaussian distribution $\cC\cN(0,\sigma)$, then the expected dual norm is bounded as:
	\equ{
		\bE \norm{\m{E}_{\Omega}}^*_{\cA_\Omega} \le C \sqrt{\sigma M L (L+\ln \overline{N})},  \label{eq:dualE_SLA}
	}
	where the constant $C=p_1p_2 / \sbra{p_1 p_2 - p_1 - p_2}$ with $p_1 = 4\ln \sbra{L \ln 8\pi + \ln \overline{N}}$ and $p_2 = 4$.
\end{theorem}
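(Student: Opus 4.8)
**Proof proposal for Theorem 4.10 (bound on $\bE\,\|\m{E}_\Omega\|^*_{\cA_\Omega}$).**

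The plan is to start from the variational characterization of the dual atomic norm derived in \eqref{eq:dualnorm}, namely $\|\m{E}_\Omega\|^*_{\cA_\Omega} = \sup_{f\in\bT} \|\m{a}_\Omega(f)^H \m{E}_\Omega\|_1 = \sup_{f\in\bT}\sum_{l=1}^L |\m{a}_\Omega(f)^H \m{E}_{\Omega,l}|$. For each fixed $f$ and each channel $l$, $\m{a}_\Omega(f)^H\m{E}_{\Omega,l}$ is a complex Gaussian with variance $\sigma M$ (since $\|\m{a}_\Omega(f)\|_2^2 = M$), so $|\m{a}_\Omega(f)^H\m{E}_{\Omega,l}|$ is a scaled Rayleigh/half-normal variable; summing over $l$ gives a random variable whose expectation is $\Theta(L\sqrt{\sigma M})$ and which concentrates with Gaussian tails around its mean with fluctuation scale $\sqrt{\sigma M}$. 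The main task is to control the \emph{supremum} over the continuum $f\in\bT$. First I would reduce to a finite net: since $\m{a}_\Omega(f)$ has bandlimited entries, the trigonometric polynomial $f\mapsto \m{a}_\Omega(f)^H\m{E}_\Omega$ has degree at most $\overline N - 1$ in each coordinate, so by a Bernstein-type inequality for the derivative of trigonometric polynomials (as used in \cite{tang2013compressed,bhaskar2013atomic}) the supremum over $\bT$ is at most a constant multiple of the maximum over a uniform grid of $\mathcal{O}(\overline N)$ points, up to a bound on the derivative that is itself a polynomial of the same type. This is where the $\ln\overline N$ term will enter.

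Next, having reduced to a finite maximum over $\mathcal{O}(\overline N)$ grid points, and noting that at each point we also take a maximum over the relevant structure, I would apply a union bound together with the per-point concentration. The quantity $\sum_{l=1}^L|\m{a}_\Omega(f)^H\m{E}_{\Omega,l}|/\sqrt{\sigma M}$ is a $1$-Lipschitz function (in the appropriate normalization) of the $2ML$ real Gaussian entries of $\m{E}_\Omega$, hence by Gaussian concentration it has subgaussian deviations of scale $\mathcal{O}(1)$ above its mean $\mathcal{O}(L)$; more precisely, one gets a tail of the form $\exp(-c t^2)$ for the excess over roughly $\sqrt{L}$-scale fluctuations, combined with the trivial bound that the mean is $\mathcal{O}(L)$. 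Plugging $t \asymp \sqrt{L + \ln\overline N}$ into the union bound over $\mathcal{O}(\overline N)$ points makes the failure probability summable, and integrating the resulting tail gives $\bE\sup_f \sum_l |\m{a}_\Omega(f)^H\m{E}_{\Omega,l}| \le C\sqrt{\sigma M}\,\sqrt{L(L+\ln\overline N)} = C\sqrt{\sigma M L(L+\ln\overline N)}$, which is exactly \eqref{eq:dualE_SLA}. The explicit constant $C = p_1 p_2/(p_1p_2 - p_1 - p_2)$ with $p_1 = 4\ln(L\ln 8\pi + \ln\overline N)$, $p_2 = 4$ should emerge from carefully tracking the constants in the net cardinality (the $8\pi$ comes from the Bernstein/oversampling factor) and in the Gaussian tail exponent, so I would keep these explicit rather than absorbing them.

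The main obstacle, and the place where this proof genuinely differs from the ANM case in \cite{bhaskar2013atomic}, is that the relevant norm here is the $\ell_1$-norm across channels, $\sum_l|\cdot|$, rather than the $\ell_2$-norm $(\sum_l|\cdot|^2)^{1/2}$. For the $\ell_2$ case one can invoke a clean chaining/$\chi$-concentration argument; for the $\ell_1$ case the sum of $L$ dependent Rayleigh-type variables has mean growing linearly in $L$ while its fluctuations grow only like $\sqrt L$, so the bound must reflect the asymmetric roles of $L$ (appearing as $L^2$ inside the square root via the mean) and $\ln\overline N$ (appearing linearly, from the net). Getting the concentration inequality for $\sum_l |\m{a}_\Omega(f)^H\m{E}_{\Omega,l}|$ right — establishing that it is Lipschitz in the Gaussian variables with the correct constant and that its mean is bounded by $\sqrt{\pi/4}\cdot L\sqrt{\sigma M}$ or similar — and then balancing $L$ against $\ln\overline N$ in the union bound is the delicate part; the trigonometric-polynomial net argument, by contrast, is standard and can be cited. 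I would therefore organize the proof as: (i) dual-norm characterization; (ii) polynomial net reduction with explicit oversampling constant; (iii) per-point Gaussian concentration of the $\ell_1$-sum; (iv) union bound and integration of the tail to extract $\bE$ and the constant $C$.
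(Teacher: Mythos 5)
Your proposal is correct in outline and would deliver a bound of the stated order, but it takes a genuinely different route from the paper. The paper never works with the $\ell_1$-norm directly at a fixed frequency: it writes $\norm{\m{E}_\Omega}^*_{\cA_\Omega}=\sup_{f,\m{u}}\abs{\m{u}^H\m{E}_\Omega^H\m{a}_\Omega(f)}$ and discretizes \emph{both} variables, an $\varepsilon_1$-net of $\bT$ of size $1/(2\varepsilon_1)$ (controlled, as in your step, by the Schaeffer/Bernstein derivative bound) and an $\varepsilon_2$-net of the $\ell_\infty$-ball $\mathcal{B}_{\infty}^L(1)$ of size $(3/\varepsilon_2)^{2L}$; at each net point the scalar $\m{u}_0^H\m{E}_\Omega^H\m{a}_\Omega(f_0)$ is a single complex Gaussian of variance $\sigma ML$, so the whole bound follows from the elementary expected-maximum-of-Gaussians lemma, with the $L^2$ term coming from the metric entropy $\ln\abs{\cN_2}\asymp L$ and the explicit constant $C=p_1p_2/(p_1p_2-p_1-p_2)$ coming from optimizing $\varepsilon_1,\varepsilon_2$ (so the $8\pi$ is an artifact of that optimization of the $\m{u}$-net resolution, not of a Bernstein oversampling factor as you guessed). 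Your route instead nets only over $f$ and treats the channel dimension by Lipschitz Gaussian concentration of $\sum_l\abs{\m{a}_\Omega(f)^H\m{E}_{\Omega,l}}$ about its mean $\le L\sqrt{\sigma M}$, then union-bounds over $\mathcal{O}(\overline{N})$ points and integrates the tail; here the $L^2$ enters through the mean and $L\ln\overline{N}$ through the fluctuations. This is a valid alternative: it avoids the exponentially large ($e^{cL}$) net in the channel direction and makes the asymmetric roles of $L$ and $\ln\overline{N}$ transparent, at the price of needing Gaussian concentration plus tail integration rather than just an $\bE\max$ bound, and of producing a different (though comparable) absolute constant than the specific $C$ quoted in the theorem. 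One small correction to your accounting: after fixing $f$, the Lipschitz constant of $\m{E}_\Omega\mapsto\sum_l\abs{\m{a}_\Omega(f)^H\m{E}_{\Omega,l}}$ with respect to the Frobenius norm is $\sqrt{ML}$, so the per-point fluctuation scale is $\sqrt{\sigma ML}$, not $\sqrt{\sigma M}$ as stated in your first paragraph (your later statement that fluctuations grow like $\sqrt{L}$ is the right one); with this fix, $t\asymp\sqrt{\ln\overline{N}}$ already suffices in the union bound and the final estimate $\bE\norm{\m{E}_\Omega}^*_{\cA_\Omega}\lesssim L\sqrt{\sigma M}+\sqrt{\sigma ML\ln\overline{N}}\asymp\sqrt{\sigma ML(L+\ln\overline{N})}$ follows as you claim.
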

\begin{proof}
	See \cref{append:noisy}.
\end{proof}

It follows from \cref{thm:noisy} that the regularization parameter $\tau$ can be given as the upper bound in \eqref{eq:dualE_SLA}. In this case, the problem in \eqref{eq:atomic_min2} produces a consistent estimate (on $\m{\Omega}$) when $K=o\sbra{\sqrt{ML/(L+\ln \overline{N})}}$. In the single-channel case of $L=1$, the bound in the full data case is consistent with that in \cite{bhaskar2013atomic}. 

\section{Numerical Results} \label{sec:Sim}
In this section, we present numerical results to evaluate the performance of SACA and validate our theoretic analyses. 
All algorithms are implemented using MATLAB (R2021b) on a 64-bit Windows server with an Intel Xeon Gold 6133 CPU at 2.5 GHz and 192 GB of RAM.
In SACA, the SLRA problems in \eqref{eq:p_SDP} and \eqref{eq:p_SDP_completion} are solved using an IPM with SDPT3 solver (version 4.0) \cite{toh2012implementation} in the CVX toolbox \cite{cvx}, using default parameters, e.g., the Helmberg-Kojima-Monteiro (HKM) direction.
We set the dimension parameter $n= \lceil (N+1)/2 \rceil$ in the following experiments by default unless otherwise specified.

\subsection{The Noiseless Case}
In \emph{Experiment 1}, we study the number of identifiable frequencies via comparing SACA with ANM \footnote{\url{https://sites.google.com/site/zaiyang0248/publications}} \cite{yang2016exact} (without using the CA property), ACMA \footnote{\url{https://sps.ewi.tudelft.nl/Repository/repitem.php?id=15&ti=3}} \cite{leshem1999direction,van1996analytical}, and SMART \cite{wu2022Direction}. Since the model order $K$ is unknown for SACA, we propose a checking mechanism for frequency parameter identifiability from the noiseless observation $\m{X}^\star$ according to \cref{thm:SLRA}. In particular, we first solve the SLRA problem of $\m{X}^\star$ in \eqref{eq:SDP}. If $\rank\sbra{\cT\m{t}^*}=n$ for the numerical solution, then we increase $n$; otherwise the algorithm is terminated and the frequencies are retrieved from $\cT\m{t}^*$ via root-MUSIC. The maximum value of $n$ is set to $2N-1$, while it is theoretically shown in \cite{wax1992unique,williams1992resolving,valaee1994alternative} that at most a number $2N-3$ of frequencies can be uniquely identified from a length-$N$ CA signal when the full data is available. 

In our experiment, we consider the full data case with $N=5$ and two settings of the number of channels, $L=10$ and $L=100$. A number $K$ of frequencies with $K \in \lbra{2,\ldots,8}$ are given by $\lbra{-0.45+(k-1)/K, k=1,\ldots,K}$. We say that the frequencies are successfully estimated by an approach if the root mean squared error (RMSE) $\frac{1}{\sqrt{K}}\twon{\m{f}-\widehat{\m{f}}} \le 10^{-4}$, where $\widehat{\m{f}}$ is the vector of estimated frequencies. For each $K$, we calculate the success rate averaged over $100$ Monte Carlo trials.
The true $K$ is fed into ACMA and SMART.
Our simulation results are presented in \cref{fig:K_N}. It is seen that for SACA, SMART, and ACMA, the number of identifiable frequencies increases as $L$ increases from $10$ to $100$. As expected, ANM can identify at most $N-1=4$ frequencies since it does not use the CA property. The number of frequencies identified by the two-step method ACMA is consistent with the assumption $K \le \min(N, \sqrt{L})$ made in \cite{van1996analytical} because it cannot use the Vandermonde structure in its first step (note that other ACMA-based variants suffer from similar limitations). In contrast to them, both SACA and SMART can identify $K>N=5$ frequencies. This partly verifies the conclusion for frequency parameter identifiability of CA signals in \cite{wax1992unique,williams1992resolving,valaee1994alternative}. It is also seen that SACA outperforms SMART. This is because SMART suffers from convergence issues when $K$ is large. In contrast to SMART, SACA is a convex approach and can always find the global optimizer. 

\begin{figure}[htb]  
	\centering
	\subfigure[$L=10$] {\includegraphics[width=6cm]{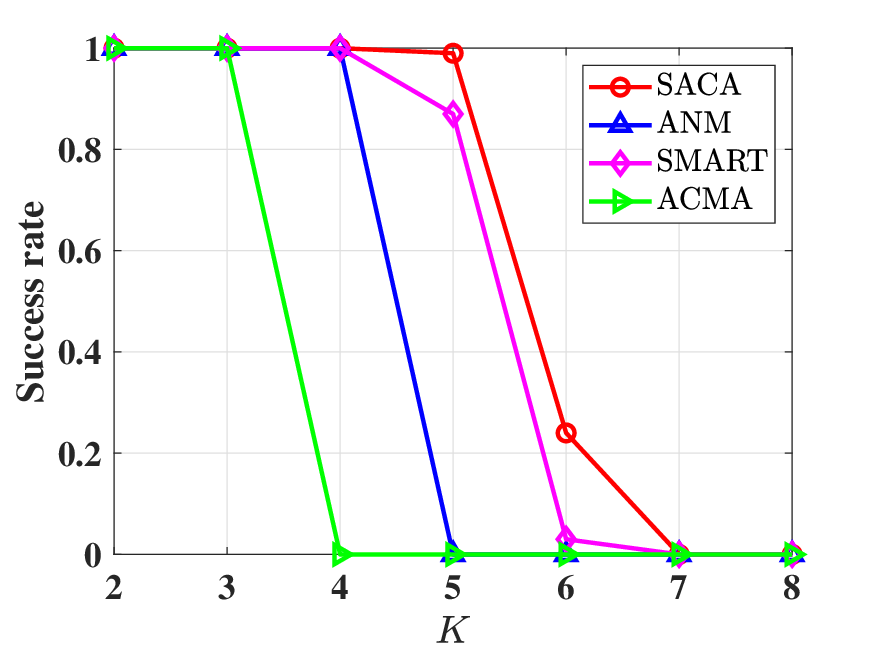}}
	\subfigure[$L=100$]
	{\includegraphics[width=6cm]{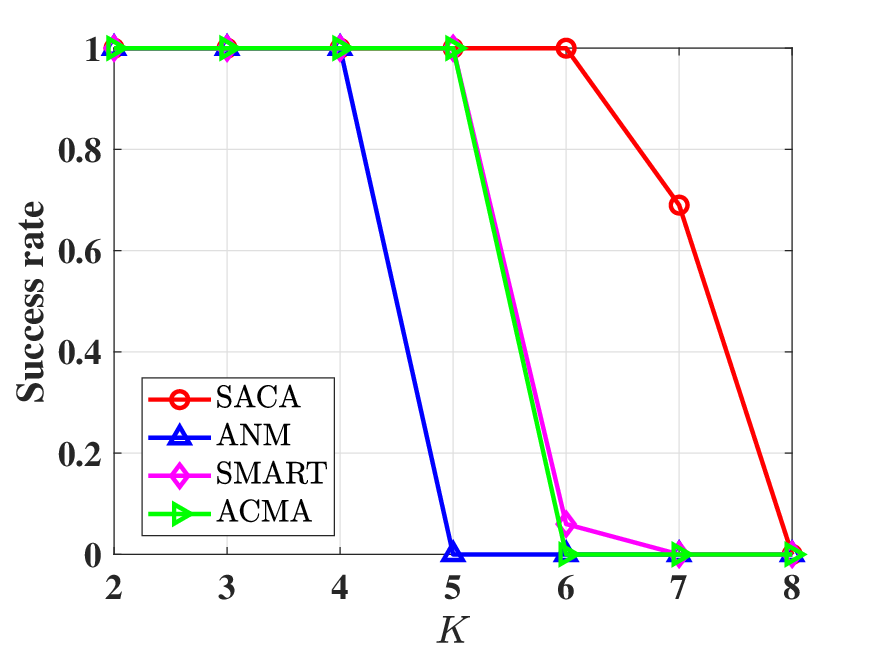}}
	\caption{Success rate of frequency estimation versus $K$ in the full data case with $N=5$.}
	\label{fig:K_N}
\end{figure}

\begin{figure*}[htb] 
	\centering
	\subfigure[] {\includegraphics[width=4cm]{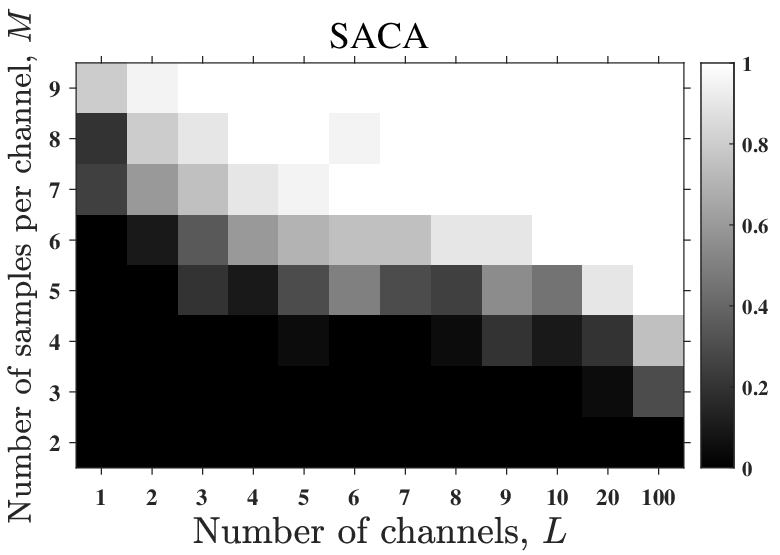}}
	\subfigure[] {\includegraphics[width=4cm]{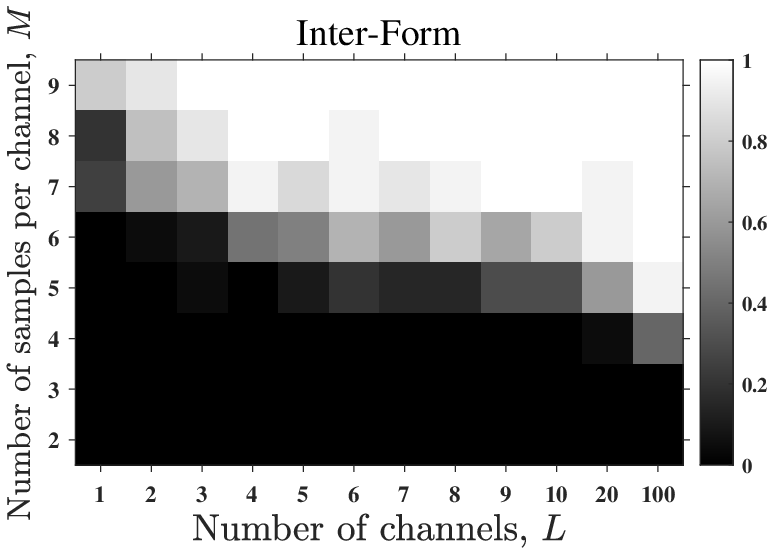}}
	\subfigure[] {\includegraphics[width=4cm]{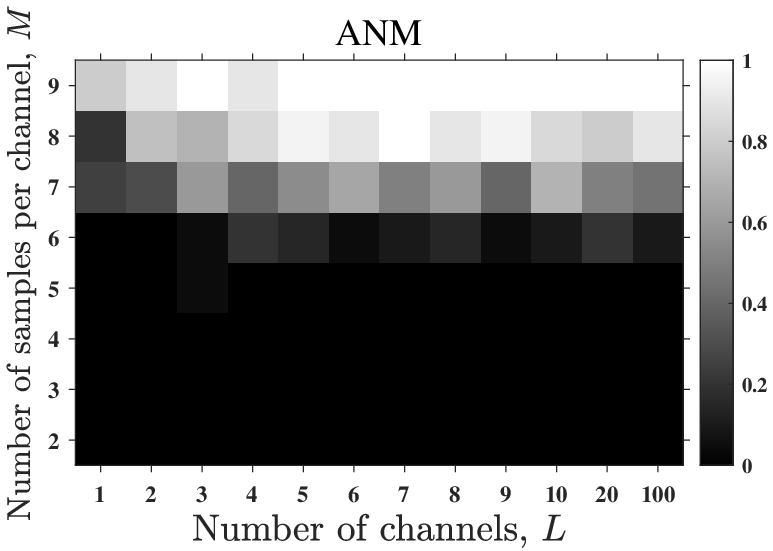}}
	
	\subfigure[] {\includegraphics[width=4cm]{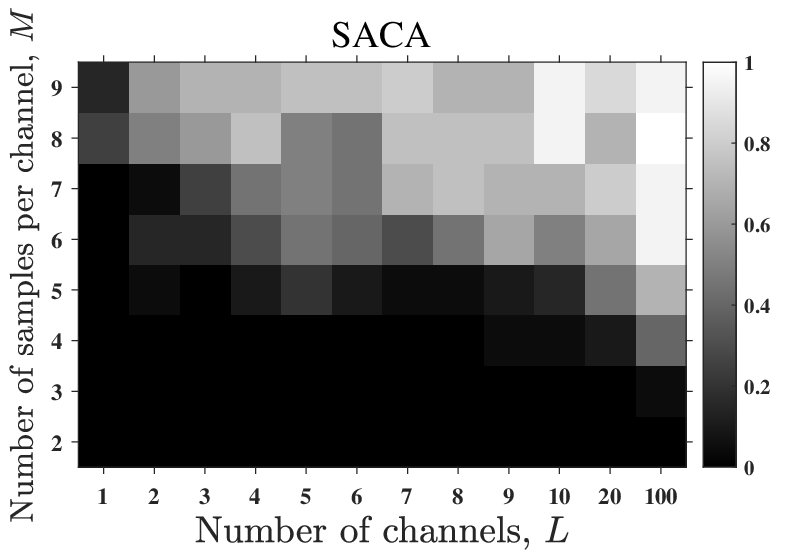}}
	\subfigure[] {\includegraphics[width=4cm]{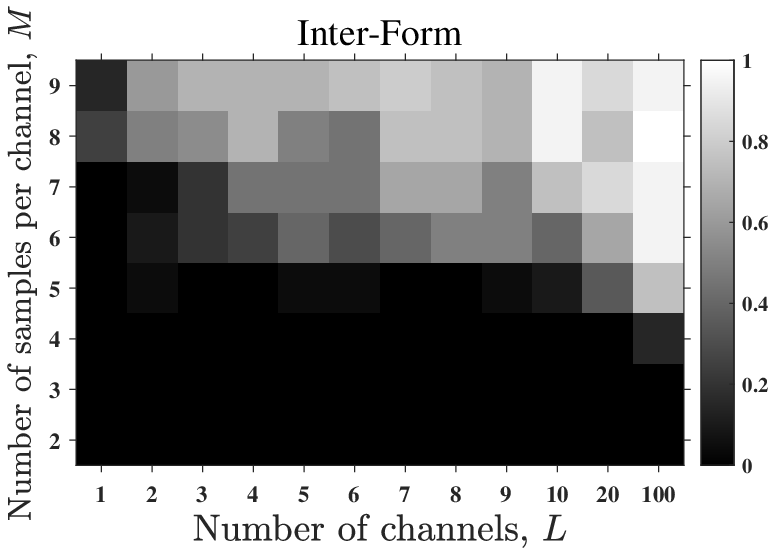}}
	\subfigure[] {\includegraphics[width=4cm]{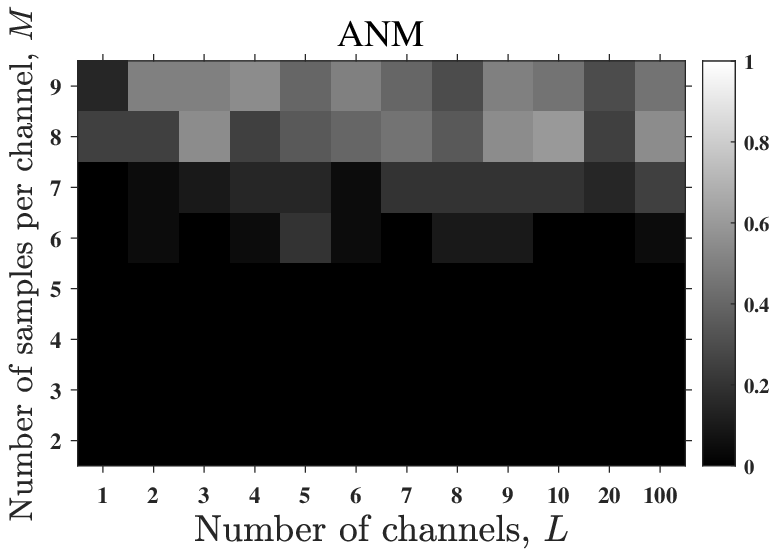}}
	\caption{Phase transition results in the missing data case when $N=11$ and $K=4$. (a) SACA with $\Delta_{f} = 1.2/N$. (b) Inter-Form with $\Delta_{f} = 1.2/N$.  (c) ANM with $\Delta_{f} = 1.2/N$. (d) SACA without separation. (e) Inter-Form without separation. (f) ANM without separation. White means complete success and black means complete failure.}
	\label{fig:PhaseFig}
\end{figure*}

In \emph{Experiment 2}, we consider the missing data case and study the success rates of the proposed SACA in \eqref{eq:p1} as compared to ANM \cite{yang2016exact} and the intermediate formulation in \eqref{eq:SDP_T_var}, designated as Inter-Form.  
In particular, $M$ entries of the observation set $\Omega$ are randomly selected from $\lbra{1,\ldots,N}$ with $N=11$. A number $K=4$ of frequencies are randomly generated with or without a minimum separation $\Delta_{f} = 1.2/N$. Amplitudes $\lbra{b_k}$ and phases $\lbra{\phi_{k,l}}$ are also randomly generated. We consider $M\in \lbra{2,\ldots,9}$ and $L\in \lbra{1,\ldots,10,20,100}$. The success rate is calculated by averaging over $20$ Monte Carlo trials for each combination $\sbra{M,L}$. Our results are presented in \cref{fig:PhaseFig} where phase transition behaviors are observed. It is seen that SACA has a much larger success phase than ANM, which verifies the advantage of using the CA property shown in \cref{lem:weaker}.
The required samples per channel for exact frequency estimation using SACA decreases with the number of channels, verifying \cref{thm:SLA}. When the minimal separation $\Delta_{f}$ is absent, both the performances of SACA and ANM degrade, while SACA still outperforms ANM. The performance of Inter-Form is better than ANM but inferior to SACA for both well-separated and random frequencies.

To understand the performance of Inter-Form presented in the last experiment, in \emph{Experiment 3}, we study the capability of SACA, ANM \cite{yang2016exact}, and Inter-Form in exploiting the CA structure. Specifically, we consider $N=11$, $M=6$, and $L=10$. The frequencies are generated randomly with a minimum separation $1.2/N$. A number $K=4$ of CA signals are generated as in \emph{Experiment 2}. Our results are presented in \cref{fig:threeSDP_CM_nonCM}. It is seen that both of the recovered amplitudes of ANM and Inter-Form are not constant, while the fluctuation of that by Inter-Form is smaller than that by ANM. In contrast to this, the recovered amplitudes of our SACA are constant and consistent with the ground truth. We therefore conclude that Inter-Form can indeed exploit the CA structure to some extent, but differently from the SLRA problem in \cref{thm:SLRA}, it is not an exact characterization of the CA atomic norm.

\begin{figure*}[htb]
	\centering
	\includegraphics[width=9cm]{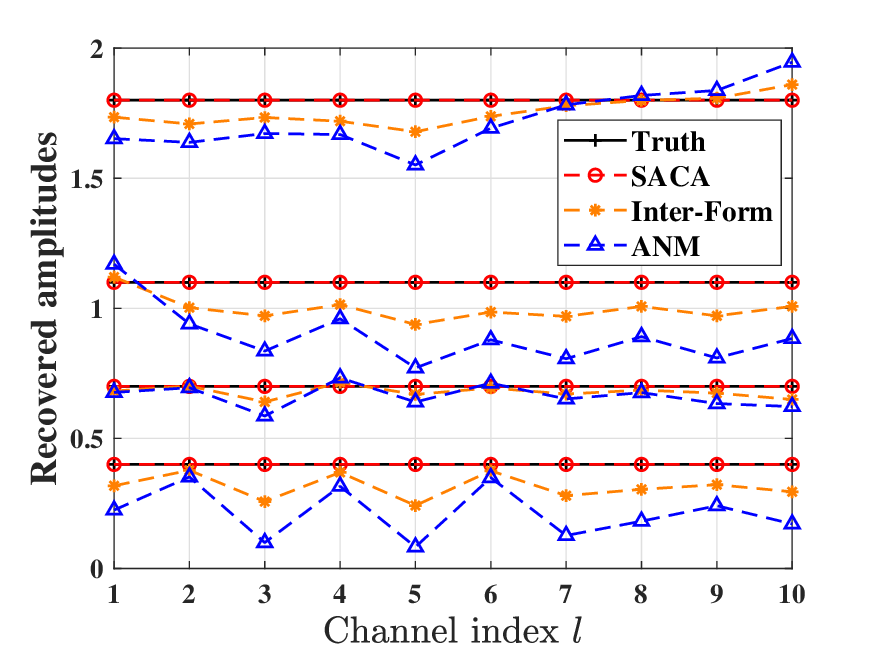}
	\caption{The recovered amplitudes versus the channel index $l$.} \label{fig:threeSDP_CM_nonCM}
\end{figure*} 

\subsection{The Noisy Case}
In this subsection, the methods that we use for comparison include two representative two-step methods, ACMA \cite{van1996analytical,leshem1999direction} and ZF-ACMA \cite{van2001asymptotic}, and two MLE methods, Newton’s method (NM) \cite{leshem2000maximum} and SMART \cite{wu2022Direction}. Besides, the ANM method using the ADMM algorithm \footnote{\url{http://users.ece.cmu.edu/~yuejiec/publications.html}} for general multichannel signals \cite{li2015off} is also considered.
The CRBs for CA and general multichannel signals \cite{leshem1999direction}, \cite{stoica1989music} are presented as benchmarks.
The signal-to-noise ratio (SNR) is defined as $10\log_{10}\sbra{\frobn{\m{X}^\star}^2/\frobn{\m{E}}^2}$.
For ACMA and ZF-ACMA, one-dimensional grid search with a uniform grid of size $10^4$ is used to estimate the frequencies from the estimated matrix as suggested in \cite{leshem1999direction}. For NM, we initialize it by ACMA as in \cite{leshem2000maximum}.
Since ACMA, ZF-ACMA, NM, and SMART need the model order, we feed the true $K$ to them. For SACA and ANM, the noise variance $\sigma$ is used to calculate the regularization parameters. Since the model order might not be correctly determined, to make a fair comparison, we compute the $K$ frequency estimates using root-MUSIC for SACA and ANM. A total of $200$ Monte Carlo trials are conducted and then averaged to produce each simulated point in the following figures.

In \emph{Experiment 4}, we test the effect of the frequency separation $\Delta_f$ on the frequency estimation performance of SACA. For the full data case with $N=11$, we consider $K = 3$ frequencies given by $\lbra{-0.01,-0.01+\Delta_f, 0.35}$ and vary $\Delta_f$.
Complex white Gaussian noise is added to the observations with SNR $=20$dB. We set $L=20$.
Our simulation results are presented in \cref{fig:delta}.
It is seen that the CRB for CA signals is significantly lower than that for general signals when the frequency separation is small, implying great importance of using the CA property in this regime. 
ACMA and ZF-ACMA are sensitive to the frequency separation due to their suboptimal treatment of the Vandermonde and the grid search, to be specific, their accuracy fluctuates when the separation $\Delta_f \in [0.14, 0.2]/N$ and approaches the CRB for CA signals as the separation increases. NM performs similarly as ACMA. 
Thanks to the use of the CA property, the proposed SACA can outperform the CRB for general signals and always performs better than ANM. As compared to the two-step methods and NM, the convex approach SACA provides stable estimates and has higher resolution. SACA is inferior to the nonconvex approach SMART relying on the true model order.

\begin{figure*}[htb]
	\centering
	\includegraphics[width=9cm]{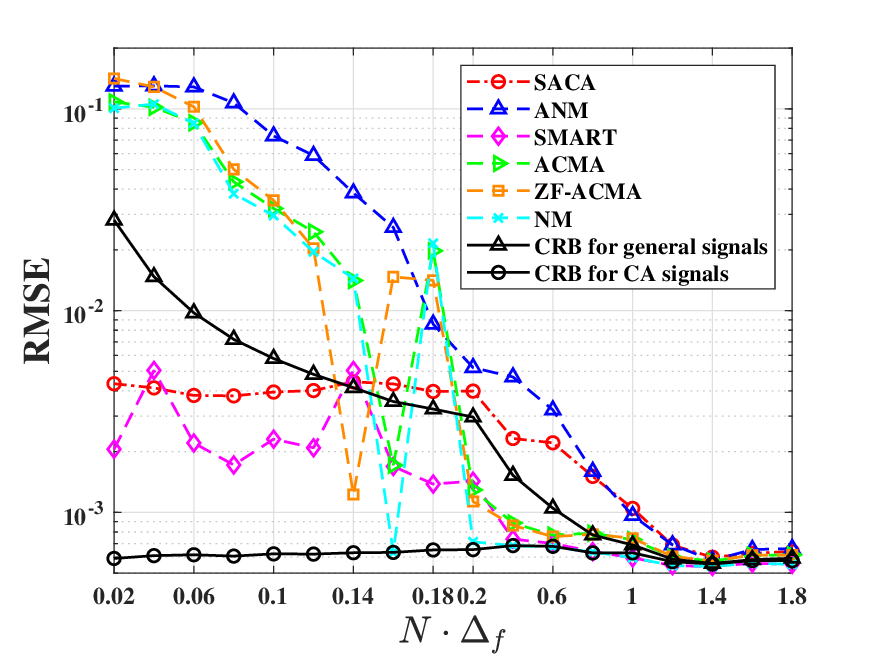}
	\caption{RMSE versus the frequency separation $\Delta_f$ when frequencies $\m{f} = \mbra{-0.01,-0.01+\Delta_f, 0.35}^T$.} \label{fig:delta}
\end{figure*} 

\begin{figure*}[htb] 
	\centering
	\subfigure[] {\includegraphics[width=6.1cm]{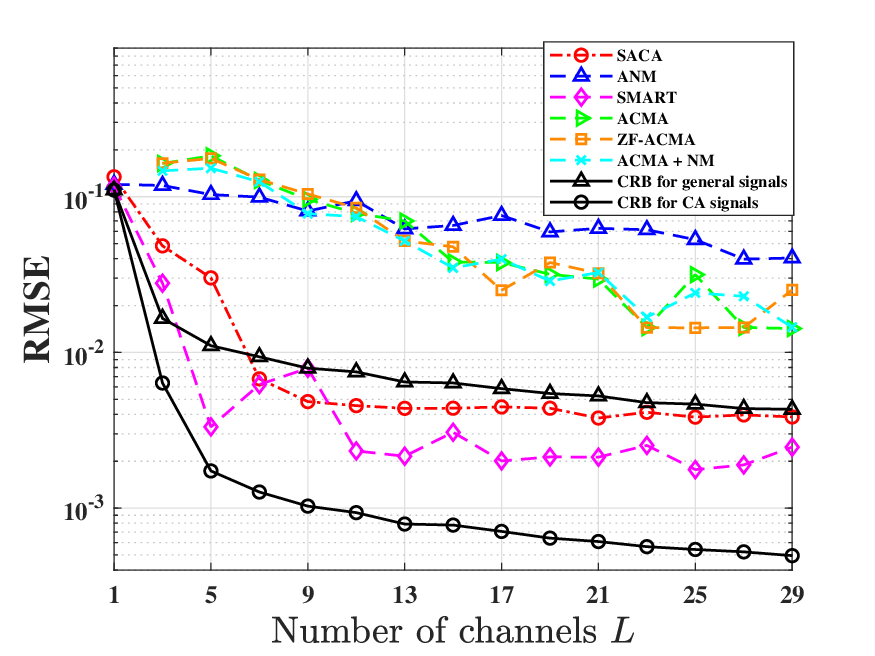}\label{fig:L_near} }
	\subfigure[] 
	{\includegraphics[width=6.1cm]{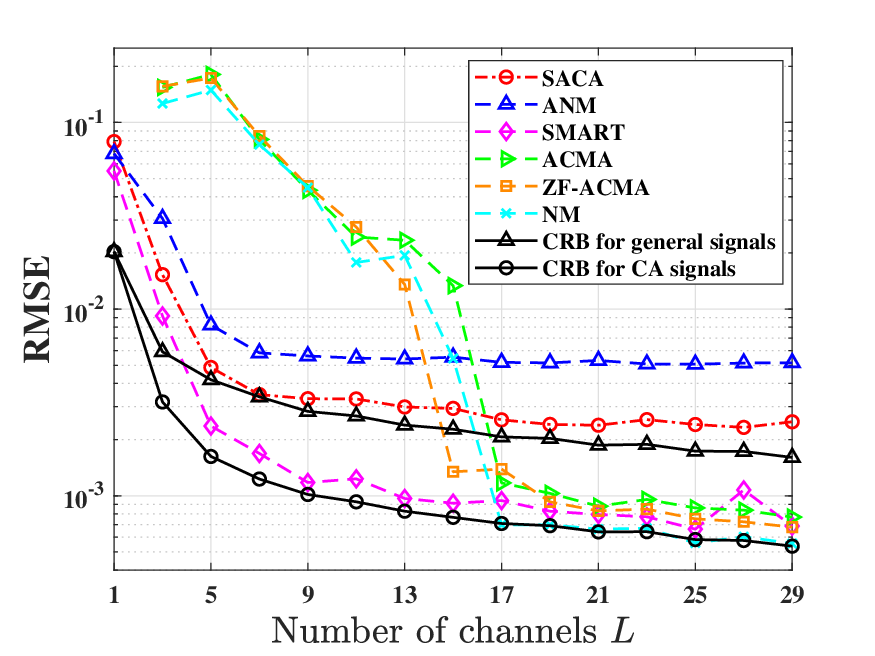} \label{fig:L_far} }
	\caption{RMSE versus the channels $L$ when (a) frequencies $\m{f}=\mbra{-0.01,0,0.35}^T$ and (b) frequencies $\m{f}=\mbra{-0.01,-0.01+0.3/N,0.35}^T$.}
\end{figure*}

In \emph{Experiment 5}, we study the RMSE performance versus the number of channels $L$. We consider $N=11$, $K=3$, SNR $=20$dB, and two sets of frequencies. 
Our simulation results regarding $\m{f} = \mbra{-0.01,0,0.35}^T$ and $\m{f} = \mbra{-0.01,-0.01+0.3/N,0.35}^T$ are presented in \cref{fig:L_near} and \cref{fig:L_far}, respectively. 
It is seen in \cref{fig:L_near} that the RMSEs of ACMA, ZF-ACMA, NM and ANM are always greater than the CRB for general signals under such closely located frequencies. The RMSE of SACA decreases with the increase of $L$ and is less than the CRB for general signals as $L\ge 7$. As the frequency separation increases in \cref{fig:L_far}, ACMA and ZF-ACMA get close to and NM achieves the CRB for CA signals when a sufficient number of channels are available. The proposed SACA always outperforms ANM, is superior to ACMA, ZF-ACMA and NM when the channels are limited, and is inferior to SMART. 

In \emph{Experiment 6}, we study the RMSE performance versus SNR. We consider the full data case with $N=11$ and the missing data case with $M = 9$ and $\Omega^c = \lbra{2,10}$. 
We set the frequencies $\m{f}=\mbra{-0.01,0,0.35}^T$ and channels $L=20$ and vary the SNR from $0$ to $30$ dB. Our simulation results regarding the two cases are presented in \cref{fig:SNR_ULA} and \cref{fig:SNR_SLA}, respectively. It is seen that the proposed SACA always performs better than ANM and outperforms the two-step methods and NM under low and moderate SNR.

\begin{figure*}[htb] 
	\centering
	\subfigure[]
	{\includegraphics[width=6.1cm]{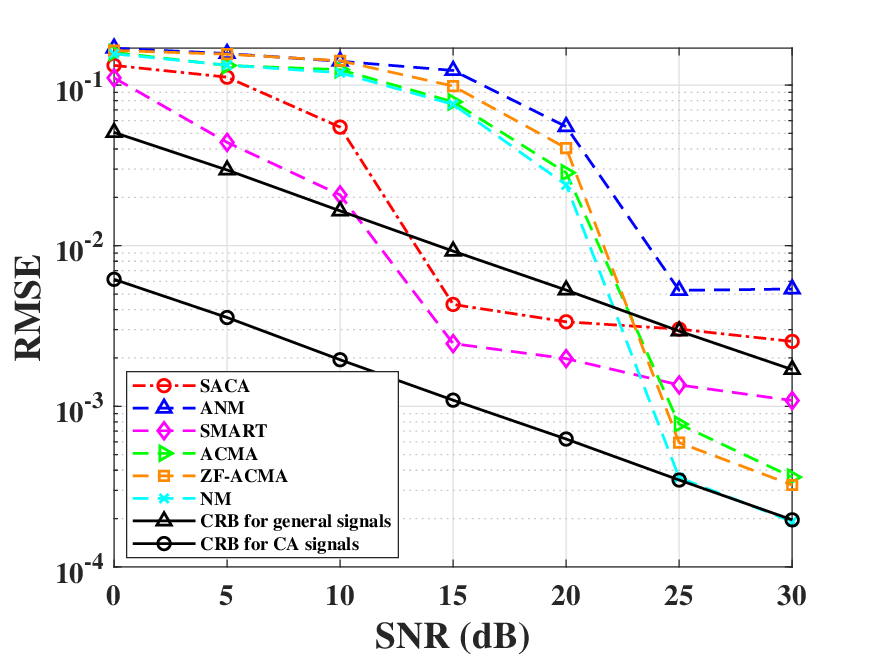} \label{fig:SNR_ULA} }
	\subfigure[] {\includegraphics[width=6.1cm]{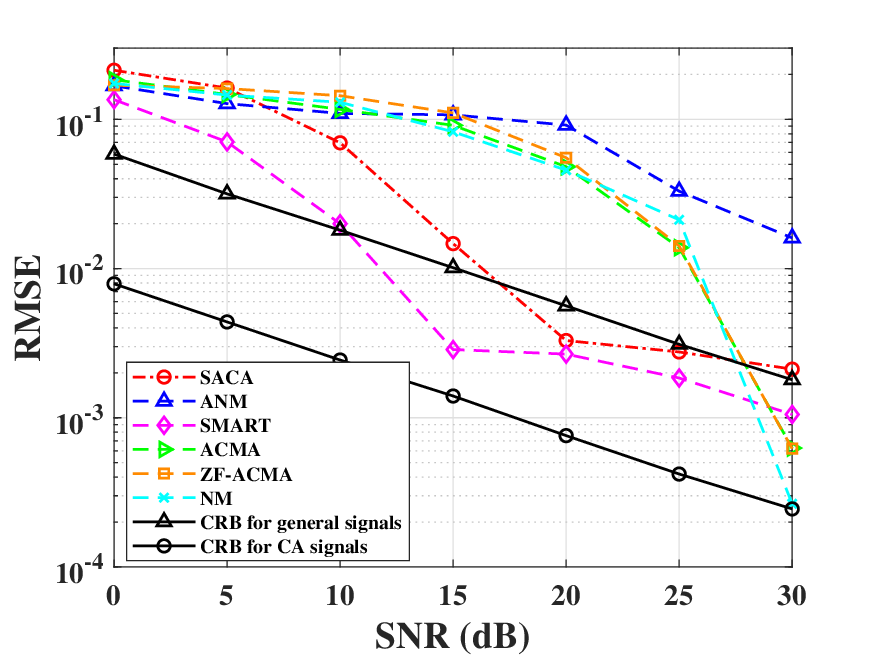} \label{fig:SNR_SLA} }
	\caption{RMSE versus the SNR (a) in the full data case and (b) in the missing data case with $M=9$.}
	\label{fig:sixFig}
\end{figure*}

In \emph{Experiment 7}, we study the average RMSE and CPU time for different $N$ over $100$ Monte Carlo trials. Specifically, we consider the full data case and set SNR $=20$dB, $L=30$, and $\m{f}=\mbra{-0.01,-0.01+0.05/N,0.35}^T$. In the ADMM algorithm for SACA in Algorithm \ref{alg:SACA}, we initialize the penalty parameter $\rho$ by $\rho=1/\sqrt{N}$ and adaptively update it as in \cite[Section 3.4.1]{boyd2010distributed} to accelerate convergence. The ADMM will be terminated if the absolute and relative errors are below $10^{-4}$ and $10^{-5}$, respectively (see \cite[Section 3.3.1]{boyd2010distributed} for details), or a maximum number of $1000$ iterations are reached. Note that the basic code of SDPT3 is written in MATLAB, but key subroutines are implemented in Fortran and C and incorporated using Mex files \cite{toh2012implementation}. This integration has a positive impact on the CPU time for SACA with IPM.
The results are shown in \cref{tab:1}. 
ZF-ACMA is omitted as it exhibits a similar performance to ACMA. It is seen from \cref{tab:1} that SACA with ADMM is faster than SACA with IPM, particularly when $N=51$, although it has a slightly higher RMSE. ANM with ADMM, ACMA, and NM have low time costs, but their RMSE values are noticeably higher than those of SACA and SMART. 

\begin{table*}[htbp]
	\centering
	\caption{ Average RMSE and CPU time when $N=11$ and $N=51$ }
	\label{tab:1}
	\resizebox{0.6\columnwidth}{!}{
		\begin{tabular}{llcc}
			\toprule 
			$N$ & Method & RMSE & Time (s) \\
			\midrule 
			$11$ & SACA (IPM)  & 3.36e-03 & 4.58  \\
			{ } & SACA (ADMM)  & 4.18e-03 & 0.71  \\
			{ } & ANM (ADMM)  & 1.32e-01 & 0.24  \\
			{ } & SMART  & 1.46e-03 & 4.43  \\
			{ } & ACMA  & 6.85e-02 & 0.25  \\
			{ } & NM  & 6.40e-02 & 0.42  \\
			$51$ & SACA (IPM)  & 1.50e-04 & 118.68  \\
			{ } & SACA (ADMM)  & 4.84e-04 & 11.43  \\
			{ } & ANM (ADMM)  & 5.71e-02 & 1.26 \\
			{ } & SMART  & 2.21e-04 & 39.06  \\
			{ } & ACMA  & 2.93e-02 & 0.35  \\
			{ } & NM  & 2.92e-02 & 0.49  \\
			\bottomrule
		\end{tabular}
	}
\end{table*}

\section{Conclusion}
In this paper, the convex approach SACA was proposed for the frequency estimation problem with constant amplitude based on atomic norm minimization. A novel convex structured low-rank approximation characterization of SACA was formulated based on multiple Hankel-Toeplitz matrices. The advantage of SACA due to the use of the constant amplitude property was analyzed theoretically. Simulation results were provided that validate our theoretical findings and demonstrate superior performance of SACA as compared to existing methods.

Since the traditional IPM is a second-order method and the ADMM algorithm for SACA needs to compute eigen-decompositions in each iteration, both of them become slow for large-scale CAFE problems. A future research direction is to develop faster solvers for the proposed SLRA problems of SACA, e.g., by incorporating the Burer-Monteiro factorization technique \cite{burer2003nonlinear} into the IPM \cite{bellavia2021relaxed}, enabling operations on smaller factor matrices rather than full-rank matrices, or the ADMM algorithm, avoiding the eigen-decompositions.

\appendix

\section{Proof of \cref{lem:twonorm}} \label{append:1}
According to the definition of atomic norm, it suffices to show that the convex hulls of $\cA$ and $\cA'$ are equal, i.e., 
$
{\rm conv}\sbra{\cA} = {\rm conv}\sbra{\cA'}.
$
We have $\text{conv} \sbra{\cA} \subset \text{conv} \sbra{\cA'}$ since $\cA \subset \cA'$. On the other hand, suppose that $\m{a}(f) \m{\psi} \in \cA'$. Since $\norm{\m{\psi}}_{\infty} = 1$, 
we have $|\psi_l| \leq 1, l=1,\dots,L$. For any $\psi_l$ in the unit circle, we can easily find $\phi_{l,1}, \phi_{l,2}$ on the unit circle such that 
$
\psi_l = \frac{1}{2} \phi_{l,1} + \frac{1}{2} \phi_{l,2}.
$
When $|\psi_l| = 1$, we take $\phi_{l,1}=\phi_{l,2} = \psi_l$.
Consequently, we can always find unit-modulus vectors $\m{\phi}_1$ and $\m{\phi}_2$, formed by ${\phi_{l,1}}$ and ${\phi_{l,2}}$ respectively, such that
$\m{\psi}_l = \frac{1}{2} \m{\phi}_1 + \frac{1}{2} \m{\phi}_2$.
It follows that
$
\m{a}(f)\m{\psi} = \frac{1}{2} \m{a}(f)\m{\phi}_1 + \frac{1}{2} \m{a}(f)\m{\phi}_2,
$
implying that $\cA' \subset \text{conv} \sbra{\cA}$ since $\m{a}(f)\m{\phi}_1, \m{a}(f)\m{\phi}_2 \in \mathcal{A}$. Then,  $\text{conv} \sbra{\cA'} \subset \text{conv} \sbra{\cA}$. Therefore, we conclude that $\text{conv} \sbra{\cA} = \text{conv} \sbra{\cA'}$, completing the proof.

\section{Derivation of \eqref{eq:dualSDP}} \label{append:dual}
Following from a standard Lagrangian analysis \cite{boyd2004convex}, we introduce a multiplier $\m{F}$ for the equality constraint and multiple PSD Lagrangian multipliers $\lbra{\m{W}^l \succeq \m{0}}^L_{l=1}$ for the inequality constraints. Write $\m{W}^l=\begin{bmatrix} \m{W}^{l,1} & \sbra{\m{U}^l}^H \\ \m{U}^l & \m{W}^{l,2} \end{bmatrix}$. Then the Lagrangian is given by:
\equ{
	\begin{split}
		& \cL' \sbra{\m{t},\m{Z},\m{F},\lbra{\m{W}^l}}
		\\
		& = t_n  - \sum^L_{l=1} \left \langle \begin{bmatrix} \m{W}^{l,1} & \sbra{\m{U}^l}^H \\ \m{U}^l & \m{W}^{l,2} \end{bmatrix}, \begin{bmatrix} \cT\overline{\m{t}} & \cH\overline{\m{Z}}_{:,l} \\ \cH\m{Z}_{:,l} & \cT\m{t} \end{bmatrix}  \right \rangle_{\bR}  + \left \langle \m{F}, \m{Z}_{ \Omega} - \m{X}^{\star}_{\Omega} \right \rangle_{\bR} \\
		& =  t_n -  \left \langle \sum^L_{l=1}\sbra{\overline{\m{W}^{l,1}}+\m{W}^{l,2}}, \cT\m{t} \right \rangle_{\bR}  - 2 \sum^L_{l=1} \left \langle \m{U}^l, \cH\m{Z}_{:,l} \right \rangle_{\bR}  + \left \langle \m{F}, \m{Z}_{ \Omega} \right \rangle_{\bR} - \left \langle \m{F}, \m{X}^{\star}_{\Omega} \right \rangle_{\bR} . \label{eq:Lag}
	\end{split}
}
The dual function is given by:
$
g\sbra{\m{F},\lbra{\m{W}^l}} = \inf_{\m{t},\m{Z}} \cL' \sbra{\m{t},\m{Z},\m{F},\lbra{\m{W}^l}}.
$
Then we consider the optimality conditions
\equ{
	\begin{split}
		\nabla_{\m{t}} \cL'\sbra{\m{t},\m{Z},\lbra{\m{W}^l}} &= \m{0}, \\
		\nabla_{\m{Z}_{:,l}} \cL'\sbra{\m{t},\m{Z},\lbra{\m{W}^l}} &= \m{0}, \ l=1,\ldots, L,
	\end{split}
}
which yields 
\equ{
	\begin{split}
		\cT^H\lbra{ \sum^L_{l=1}\sbra{\overline{\m{W}^{l,1}}+\m{W}^{l,2}} } &= \m{\xi}, \\
		\m{F}_{:,l} & = 2 \sbra{\cH^H\m{U}^l}_{\Omega}, \\
		\sbra{\cH^H\m{U}^l}_{\lbra{1,\ldots,2n-1} - \Omega} &= \m{0}, \ l = 1,\ldots,L. \label{eq:opt_cond}
	\end{split} 
}
Putting \eqref{eq:opt_cond} into \eqref{eq:Lag} and letting $\m{V}_{:,l} = -2 \sbra{\cH^H\m{U}^l}_{\lbra{1,\ldots,N}}$, 
we get the dual problem $\max g\sbra{\lbra{\m{W}^l}}$ in \eqref{eq:dualSDP}. 

\section{Proof of \cref{lem:1}} \label{append:thm}
Note that the bounded random variables $\lbra{\Phi_{k,l} = e^{i\phi_{k,l}}}$ are independent sub-Gaussian with norm $1/\sqrt{\ln 2}$.
According to \cite[Proposition 2.6.1]{vershynin2018high}, the entries of $\m{\Phi}^H \m{w}$ are independent sub-Gaussian with norm $C\twon{\m{w}}$, where $C$ is an absolute constant. Besides, for $l=1,\ldots,L$, $\bE \abs{\m{\Phi}_{:,l}^H\m{w}}^2 = \bE \m{w}^H \m{\Phi}_{:,l}\m{\Phi}_{:,l}^H \m{w} = \twon{\m{w}}^2 $. Applying \cite[Theorem 3.1.1]{vershynin2018high}, we have for all $t\ge 0$ and a numerical constant $c$ that
$
\bP \lbra{ \twon{\frac{\m{\Phi}^H\m{w}}{\twon{\m{w}}}} \ge \sqrt{L}+t } \le e^{-ct^2},
$
or equivalently,
\equ{
	\bP \lbra{\twon{\frac{1}{\sqrt{L}}\m{\Phi}^H\m{w}} \ge \sbra{1+\frac{t}{\sqrt{L}}} \twon{\m{w}} } \le e^{-ct^2}.\nonumber
}
Letting $u=\sbra{1+\frac{t}{\sqrt{L}}} \twon{\m{w}}$, we complete the proof.

\section{Proof of \cref{thm:noisy}} \label{append:noisy}
According to \eqref{eq:dualnorm}, the dual atomic norm of $\m{E}_{\Omega} \in \bC^{M \times L}$ is given by:
\equ{
	\norm{\m{E}_{\Omega}}^*_{\cA_{\Omega}} = \sup_{f\in \bT,\m{u}\in \mathcal{B}_{\infty}^L(1)} \abs{\m{u}^H\m{E}_{\Omega}^H \m{a}_{\Omega}(f)},
}
where $\mathcal{B}_{\infty}^L(1) = \lbra{\m{x}\in \bC^L: \norm{\m{x}}_{\infty}= 1 }$. Let $\cN_1 = \lbra{-\frac{1}{2},-\frac{1}{2}+2\varepsilon_1, \ldots, \frac{1}{2}-2\varepsilon_1}$ with $\varepsilon_1\in (0,1]$ that is an $\varepsilon_1$-net of the frequency interval $\bT$ in the sense that for any $f \in \bT$, we can always find $f_0\in \cN_1$ such that $\abs{f-f_0}\le \varepsilon_1$. Evidently, we have the cardinality $\abs{\cN_1} = 1/(2\varepsilon_1)$. Moreover, let $\cN_2$ be an $\varepsilon_2$-net of $\mathcal{B}_{\infty}^L(1)$ with $\varepsilon_2\in (0,1]$ regarding the $\ell_{\infty}$ metric. In particular, for any $\m{u}\in \mathcal{B}_{\infty}^L(1)$, we can always find $\m{u}_0 \in \cN_2$ such that $\norm{\m{u} - \m{u}_0}_{\infty} \leq \varepsilon_2$. Using \cite[Corollary 4.2.13]{vershynin2018high}, we obtain that $\abs{\cN_2} \leq (3/\varepsilon_2)^{2L}$. 
As in \cite{bhaskar2013atomic}, our proof is based on the following two results.
\begin{lemma} [\cite{schaeffer1941inequalities}] \label{lem:derivative}
	Let $q\sbra{z}$ be any polynomial of degree $N$ on complex numbers with derivative $q'\sbra{z}$. Then, $\sup_{\abs{z}\le 1} \abs{q'\sbra{z}} \le N \sup_{\abs{z}\le 1} \abs{q\sbra{z}} $.
\end{lemma}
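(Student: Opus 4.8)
The plan is to prove this classical Bernstein derivative inequality by reducing the statement on the closed unit disk to an equivalent statement about trigonometric polynomials on the unit circle, and then invoking M.\ Riesz's interpolation formula. First I would pass from the closed disk to its boundary. Since $q$ and $q'$ are entire (they are polynomials), the maximum modulus principle gives $\sup_{\abs{z}\le 1}\abs{q(z)} = \max_{\abs{z}=1}\abs{q(z)} =: M$ and likewise $\sup_{\abs{z}\le 1}\abs{q'(z)} = \max_{\abs{z}=1}\abs{q'(z)}$, so it suffices to compare the two maxima taken over the circle $\abs{z}=1$. By homogeneity we may normalize $M=1$, since the case $M=0$ forces $q\equiv 0$ and is trivial.

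Next I would reparametrize the circle by $z=e^{i\theta}$ and set $f(\theta) := q(e^{i\theta})$. Differentiating gives $\tfrac{d}{d\theta}f(\theta) = ie^{i\theta}q'(e^{i\theta})$, so $\abs{f'(\theta)} = \abs{q'(e^{i\theta})}$ while $\abs{f(\theta)} = \abs{q(e^{i\theta})}$. Writing $q(z)=\sum_{k=0}^N a_k z^k$, the function $f(\theta)=\sum_{k=0}^N a_k e^{ik\theta}$ is a trigonometric polynomial of degree at most $N$ with $\max_\theta\abs{f(\theta)}=1$. Thus the desired bound $\max_{\abs{z}=1}\abs{q'(z)}\le N$ is exactly the Bernstein inequality for trigonometric polynomials: a trigonometric polynomial of degree at most $N$ and sup-norm at most $1$ has derivative of sup-norm at most $N$.

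For the trigonometric statement I would use M.\ Riesz's interpolation identity, which expresses the derivative as a finite sample combination:
$$ f'(\theta) = \frac{1}{4N}\sum_{k=1}^{2N} \frac{(-1)^{k-1}}{\sin^2\!\sbra{\frac{(2k-1)\pi}{4N}}}\, f\!\sbra{\theta + \frac{(2k-1)\pi}{2N}}. $$
Taking absolute values and using $\abs{f(\cdot)}\le 1$, the estimate reduces to summing the magnitudes of the coefficients, so it remains to verify the closed-form identity $\sum_{k=1}^{2N}\csc^2\!\sbra{\frac{(2k-1)\pi}{4N}} = 4N^2$, which yields $\frac{1}{4N}\cdot 4N^2 = N$ and hence $\abs{f'(\theta)}\le N$. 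Translating back through the two reductions gives the theorem. As a sharpness check, $q(z)=z^N$ attains equality, since $\abs{q}=1$ and $\abs{q'}=N$ on the circle.

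The main obstacle is establishing the Riesz interpolation formula together with the $\csc^2$ summation identity. The formula is obtained by noting that $f'$ is again a trigonometric polynomial of degree at most $N$, hence determined by its values on a $2N$-point quadrature grid, with Riesz's particular choice of half-integer nodes $\frac{(2k-1)\pi}{2N}$ arranged precisely so that the interpolation weights alternate in sign; this sign pattern is what lets one replace $\bigl|\sum_k c_k f(\cdot)\bigr|$ by $\bigl(\sum_k\abs{c_k}\bigr)\norm{f}_\infty$ without loss. If one prefers to avoid the interpolation machinery, an alternative route is the classical comparison argument: reduce to a real trigonometric polynomial via $\mathrm{Re}(e^{-i\gamma}f)$, suppose $\abs{f'(\theta_0)}>N$ at some point, and compare $f$ against a pure sinusoid $\sin(N\theta+\phi)$ of matching value at $\theta_0$, deriving a contradiction by counting the zeros of their difference, since a degree-$N$ trigonometric polynomial has at most $2N$ zeros per period. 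Either way, the analytic content lives entirely in this one-dimensional trigonometric estimate, while the disk-to-circle and disk-to-trigonometric reductions are routine.
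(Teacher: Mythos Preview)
Your proof is correct and follows the classical route: maximum modulus to pass to the boundary circle, then the substitution $z=e^{i\theta}$ to convert the polynomial inequality into Bernstein's inequality for trigonometric polynomials, which you then establish via M.\ Riesz's interpolation formula and the identity $\sum_{k=1}^{2N}\csc^2\!\sbra{\frac{(2k-1)\pi}{4N}}=4N^2$. The alternative zero-counting argument you sketch is also valid.

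There is nothing to compare against in the paper: the lemma is stated there without proof and attributed directly to Schaeffer (1941), since it is a classical result invoked only as a tool in bounding $\bE\norm{\m{E}_\Omega}^*_{\cA_\Omega}$. Your write-up thus supplies strictly more than the paper does. One small remark: the Riesz formula is usually stated for real trigonometric polynomials, whereas your $f(\theta)=\sum_{k=0}^N a_k e^{ik\theta}$ is complex-valued; this is harmless because the identity is linear in $f$ and holds for each monomial $e^{ik\theta}$ with $\abs{k}\le N$, but it is worth a one-line justification if you intend the argument to be self-contained.
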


\begin{lemma}[\cite{bhaskar2013atomic}] \label{lem:Gaussian}
	Let $x_1,\ldots,x_N$ be complex Gaussian random variables with unit variance. Then, $\bE \mbra{\max_{1\le j\le N} \abs{x_j}} \le \sqrt{\ln N +1}$.
\end{lemma}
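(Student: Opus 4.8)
The plan is to reduce the maximal inequality for the moduli $\abs{x_j}$ to one for the squared moduli $\abs{x_j}^2$, and to exploit that, for a unit-variance complex Gaussian, $\abs{x_j}^2$ follows a standard exponential law. Concretely, since $\bE\abs{x_j}^2 = 1$, writing $x_j$ in terms of its real and imaginary parts (each of variance $\tfrac12$) shows that $\abs{x_j}^2$ is exponentially distributed with rate one, so that $\bP\lbra{\abs{x_j}^2 > s} = e^{-s}$ for every $s \ge 0$. I would record this distributional fact first, since it is the only place where the Gaussian hypothesis actually enters the argument.

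Next I would pass from the modulus to its square by Jensen's inequality: because $y \mapsto y^2$ is convex, $\bE\mbra{\max_j \abs{x_j}} \le \sqrt{\bE\mbra{\sbra{\max_j \abs{x_j}}^2}} = \sqrt{\bE\mbra{\max_j \abs{x_j}^2}}$. Hence it suffices to establish the clean bound $\bE\mbra{\max_j \abs{x_j}^2} \le \ln N + 1$, after which taking square roots delivers the stated conclusion $\bE\mbra{\max_j \abs{x_j}} \le \sqrt{\ln N + 1}$.

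To prove the squared-moduli bound I would use the layer-cake (tail-integral) representation together with a union bound. Writing $M = \max_{1\le j \le N} \abs{x_j}^2 \ge 0$, we have $\bE M = \int_0^\infty \bP\lbra{M > s}\,ds$, and the union bound over the $N$ variables gives $\bP\lbra{M > s} \le \min\lbra{1,\, N e^{-s}}$. Splitting the integral at the crossover point $s = \ln N$, where $N e^{-s} = 1$, yields $\int_0^{\ln N} 1\,ds + \int_{\ln N}^{\infty} N e^{-s}\,ds = \ln N + N e^{-\ln N} = \ln N + 1$, exactly as required. Combining this with the Jensen step completes the proof.

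The computation is essentially routine once the distributional fact is in hand, so the only real subtlety I anticipate is pinning down the convention for a \emph{unit-variance} complex Gaussian: one must verify that $\bE\abs{x_j}^2 = 1$ corresponds to real and imaginary parts of variance $\tfrac12$ each, producing the rate-one exponential tail $e^{-s}$ rather than a rescaled version, since an incorrect normalization would alter the constant in the final bound. I would also be careful that Jensen's inequality is applied in the correct direction, bounding the first moment by the square root of the second moment and not the reverse.
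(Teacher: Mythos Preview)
Your argument is correct and is essentially the standard one: pass to the squared moduli via Jensen, use that $\abs{x_j}^2$ is exponential with rate one for a unit-variance (circularly symmetric) complex Gaussian, and then combine the layer-cake formula with a union bound split at $s=\ln N$. Note, however, that the paper does not actually prove this lemma---it is quoted verbatim from \cite{bhaskar2013atomic} and used as a black box in the proof of \cref{thm:noisy}---so there is no ``paper proof'' to compare against; your write-up matches the derivation one finds in that reference.
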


Denote $W\sbra{e^{i2\pi f}} = \m{u}^H\m{E}_{\Omega}^H \m{a}_{\Omega}(f)$. According to \cref{lem:derivative}, for any $f_0,f \in \bT$, we have
\equ{
	\begin{split}
		\abs{ \m{u}^H\m{E}_{\Omega}^H \m{a}_{\Omega}(f) } - \abs{ \m{u}^H\m{E}_{\Omega}^H\m{a}_{\Omega}(f_0)) } 
		& \le \abs{e^{i2\pi f}-e^{i2\pi f_0}} \cdot \sup_f \abs{W'} \\
		& = \abs{e^{i\pi \sbra{f+f_0}}\sbra{e^{i\pi \sbra{f-f_0}} -e^{i\pi\sbra{-f+f_0}} }} \cdot \sup_f \abs{W'} \\
		& \le 2\pi \abs{f-f_0} \cdot \overline{N} \cdot \sup_f \abs{W}. \label{eq:f0}
	\end{split}
}
It then follows from \eqref{eq:f0} that
\equ{
	\begin{split}
		& \abs{\m{u}^H\m{E}_{\Omega}^H \m{a}_{\Omega}(f)} - \abs{\m{u}_0^H\m{E}_{\Omega}^H \m{a}_{\Omega}(f_0)} 
		\\
		& \le \abs{ \m{u}^H\m{E}_{\Omega}^H \m{a}_{\Omega}(f) } - \abs{ \m{u}^H\m{E}_{\Omega}^H\m{a}_{\Omega}(f_0) } + 
		\abs{ (\m{u}-\m{u}_0)^H\m{E}_{\Omega}^H \m{a}_{\Omega}(f_0) } \\
		& \le 2\pi \abs{f-f_0} \overline{N} \sup_f \abs{\m{u}^H\m{E}_{\Omega}^H\m{a}_{\Omega}(f)}  + \norm{\m{u}-\m{u}_0}_{\infty} \norm{\m{E}_{\Omega}^H\m{a}_{\Omega}(f_0)}_1 \\
		& \le 2\pi \overline{N} \varepsilon_1 \sup_f \abs{\m{u}^H\m{E}_{\Omega}^H\m{a}_{\Omega}(f)} + 2\pi \varepsilon_2 \norm{\m{E}_{\Omega}^H\m{a}_{\Omega}(f_0)}_1. \nonumber
	\end{split}
}
Consequently, we have
\equ{
	\begin{split}
		\bE \norm{\m{E}_{\Omega}}^*_{\cA_{\Omega}} & \le \sbra{ 1-2\pi \overline{N} \varepsilon_1 - 2\pi \varepsilon_2 }^{-1} \bE \mbra{ \sup_{f_0\in \cN_1, \m{u}_0\in \cN_2 } \abs{ \m{u}_0^H \m{E}_{\Omega}^H \m{a}_{\Omega}(f_0) } } \\
		& \le \sbra{ 1-2\pi \overline{N} \varepsilon_1 - 2\pi\varepsilon_2 }^{-1} \sqrt{ \sigma M L } \sqrt{ \ln{ \frac{1}{2\varepsilon_1} \sbra{\frac{3}{\varepsilon_2}}^{2L} } + 1 }, \nonumber 
	\end{split}
}
where the second inequality follows from \cref{lem:Gaussian} and the fact that $\m{u}_0^H \m{E}_{\Omega}^H \m{a}_{\Omega}(f_0)$ is Gaussian with zero mean and variance
\equ{
	\begin{split}
		\bE \abs{\m{u}_0^H \m{E}_{\Omega}^H \m{a}_{\Omega}(f_0)}^2  
		& = \bE \abs{ \langle \text{vec} \sbra{\m{E}_{\Omega}}, \text{vec} \sbra{\m{a}_{\Omega}(f_0) \m{u}_0^H} \rangle }^2  = \sigma \twon{\text{vec} \sbra{\m{a}_{\Omega}(f_0) \m{u}_0^H}}^2 \\
		& = \sigma \twon{\m{a}_{\Omega}(f_0)}^2 \twon{\m{u}_0}^2 = \sigma M L. \nonumber
	\end{split}
}
Further, let $p_1=1/\sbra{2\pi \overline{N} \varepsilon_1}, p_2 = 1/\sbra{2\pi \varepsilon_2}$, we obtain
\equ{
	\begin{split}
		\bE \norm{\m{E}_{\Omega}}^*_{\cA_{\Omega}}  \le  \sbra{1-\frac{1}{p_1}-\frac{1}{p_2}}^{-1}  \sqrt{\sigma M L \sbra{2L \ln (6\pi p_2 ) + \ln \overline{N} + \ln \pi p_1 + 1} }. \nonumber
\end{split}	}
It can be shown that the right hand side is minimized approximately when $p_1 = 4\ln \sbra{L \ln 8\pi + \ln \overline{N}}$ and $p_2 = 4$. 
Hence, the upper bound on $\bE \norm{\m{E}_{\Omega}}^*_{\cA_{\Omega}}$ is of order $\sqrt{\sigma M L \sbra{L+\ln \overline{N}}}$.

\bibliographystyle{siamplain}
\bibliography{SACA}
\end{document}